\newcommand{\h}{\mathrm{H}}
\newcommand{\D}{\mathbb{D}}
\newcommand{\N}{\mathbb{N}}
\newcommand{\C}{\mathbb{C}}
\newcommand{\diag}{\mathrm{diag}}
\newcommand{\de}{\partial}
\newcommand{\id}{\mathrm{Id}}
\newcommand{\spa}{\mathrm{span}}
\newcommand{\dist}{\mathrm{dist}}
\numberwithin{equation}{section}
\theoremstyle{plain}
\newtheorem*{theo*}{Theorem}
\newtheorem{theo}{Theorem}[section]
\newtheorem{coro}[theo]{Corollary}
\newtheorem{prop}[theo]{Proposition}
\newtheorem{lemma}[theo]{Lemma}
\newtheorem{defi}[theo]{Definition}
\theoremstyle{definition}
\newtheorem*{question}{Question}
\newtheorem{rem}[theo]{Remark}
\title{Interpolating Matrices}
\author{Alberto Dayan}
\address{Department of Mathematics\newline Washington University in St. Louis,\newline One Brookings Drive, St. Louis, MO 63130, USA}
\email{alberto.dayan@wustl.edu}
\date{\today}
\begin{document}
\maketitle
\begin{abstract}
We extend Carleson's interpolation Theorem to sequences of matrices, by giving necessary and sufficient separation conditions for a sequence of matrices to be interpolating.
\end{abstract}
\section{introduction}
A sequence $\Lambda=(\lambda_n)_{n\in\N}$ of points in the unit disc $\D$ is \emph{interpolating} for the space $\h^\infty$ of bounded analytic functions on $\D$ if for any bounded sequence $(w_n)_{n\in\N}$ there exists a non zero function $f$ in $\h^\infty$ such that $f(\lambda_n)=w_n$, for any $n$ in $\N$. Intuitively, being interpolating is a matter of how separated the sequence is in $\D$. Since the interpolating functions are holomorphic a natural way to compute distances between points of $\Lambda$ is by using the \emph{pseudo-hyperbolic} distance
\[
\rho(z_1,z_2):=|b_{z_1}(z_2)|=\bigg|\frac{z_1-z_2}{1-\overline{z_1}z_2}\bigg|.
\]
Here $b_\tau$ will denote the involutive Blaschke factor at a point $\tau$ in $\D$. \\
We will say that $\Lambda$  is \emph{strongly separated} if 
\begin{equation}
\label{eqn:ss}
\inf_{n\in\N}\prod_{k\ne n}\rho(\lambda_n, \lambda_k)>0.
\end{equation}

A celebrated result due to Carleson \cite{carloss} affirms that $\Lambda$ is interpolating if and only if it is strongly separated. For a geometric proof, see \cite[Th. 1.1, Ch. VII ]{gar}. In \cite[Th. 9.42]{john}, one can find the same result restated (and proved) in terms of separation conditions on the reproducing kernel functions $(k_{\lambda_n})_{n\in\N}$ in $\h^2$.\\

We use both those approaches to characterize interpolating sequences of matrices (of any dimensions) with spectra in the unit disk.\\
Throughout the discussion, a holomorphic functions $f$ will be applied to a square matrix $M$ via the Riesz-Dunford functional calculus.  Section \ref{sec:intspec} will make it precise, but at this stage just observe that, since we will consider functions holomorphic in the unit disc, the eigenvalues of all the matrices we will consider are in $\D$.\\
A first attempt to emulate the definition of an interpolating sequence of scalars can then be to define a sequence $(A_n)_{n\in\N}$ of square matrices with spectra in $\D$ to be interpolating if, given a sequence $(W_n)_{n\in\N}$ of square matrices which is bounded in the operator norm, then there exists a function $f$ in $\h^\infty$ such that $f(A_n)=W_n$.\\
One reason why this can't be the right approach is given by the following example: let $\Lambda=(\lambda_n)_{n\in\N}$ be a Blaschke sequence in $\D$, and set
\[
A_n=\begin{bmatrix}
\lambda_n & 1\\
0 & \lambda_n
\end{bmatrix}.
\]
The least that we can expect from a definition of interpolating sequence of matrices that is consistent with the scalar case is that $(f(A_n))_{n\in\N}$ is a target sequence too, for any $f$ in $\h^\infty$. But if we set $B$ to be the Blaschke product at the points of $\Lambda$, then $(B'(\lambda_n))_{n\in\N}$ is unbounded, and so is
\[
B(A_n)=\begin{bmatrix}
0 & B'(\lambda_n)\\
0 & 0
\end{bmatrix}.
\]
Therefore, a target sequence for an interpolating sequence of matrices can't simply be a sequence of matrices bounded with respect the operator norm. The problem seems to be that, given a Jordan block
\[
J=\begin{bmatrix}
\lambda & 1\\
0 & \lambda
\end{bmatrix}\qquad\lambda\in\D
\]
and a bounded holomorphic function $f$, then
\[
f(J)=\begin{bmatrix}
f(\lambda) & f'(\lambda)\\
0 & f(\lambda)
\end{bmatrix},
\]
and while $f(\lambda)$ can assume any bounded value, $f'(\lambda)$ has bound that depends on the $\h^\infty$ norm of $f$ and, most importantly, on $\lambda$.\\
The most natural way to overcome this obstruction is to identify a target sequence for a sequence of matrices $(A_n)_{n\in\N}$ with a bounded sequence in $\h^\infty$:
\begin{defi}[Interpolating Matrices]
Let $A=(A_n)_{n\in\N}$ be a sequence of complex square matrices (of perhaps different dimensions) with spectra contained in $\D$. Then $A$ is interpolating for $\h^\infty$ if, for any bounded sequence $(\phi_n)_{n\in\N}$ in $\h^\infty$ there exists a function $\varphi$ in $\h^\infty$ such that 
\[
\varphi(A_n)=\phi_n(A_n)\qquad n\in\N.
\]
\end{defi}
Observe that if the matrices in $A$ reduce to scalars, then we recover the classic definition of an interpolating sequence in $\D$.\\

Also the notion of strong separation can be generalized to matrices. Let $M$ be a square matrix, and $\lambda$ be one of its eigenvalues. We define the \emph{order} of $\lambda$ as the maximum of the sizes of all the Jordan blocks of $M$ associated to $\lambda$. If $M$ has eigenvalues $\lambda_1, \dots, \lambda_k$ with orders $m_1, \dots, m_k$ respectively, define
\[
B_M(z):=\prod_{j=1}^kb_{\lambda_j}^{m_j}(z)\qquad z\in\D
\]
as the \emph{Blaschke factor associate to $M$}. Observe that $B_M$ is an inner function and that $B_M(M)=0$. In particular, $B_M$ is the product of the least number of Blaschke factors at the eigenvalues of $M$ that vanishes at $M$. \\
One can also define $B_M$ from the minimal polynomial $P_M$ of $M$:  $B_M=P_M/\widetilde{P}_M$,  where
\[
\widetilde{p}(z):=z^{\deg(p)}\overline{p}\left(\frac{1}{\overline{z}} \right),\qquad z\in\D.
\]
\begin{defi}
\label{defi:sep}
A sequence of square matrices $(A_n)_{n\in\N}$ is strongly separated if
\begin{equation}
\label{eqn:ssm}
\inf_{z\in\D}\sup_{n\in\N}\prod_{k\ne n}|B_{A_k}(z)|>0.
\end{equation}
\end{defi}
Our first result can be stated as follows:
\begin{theo}
\label{theo:intmat}
Let $A=(A_n)_{n\in\N}$ be a sequence of matrices with spectra in $\D$. Then $A$ is interpolating if and only if it is strongly separated.
\end{theo}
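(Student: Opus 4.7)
Suppose $A = (A_n)_{n\in\N}$ is interpolating. The natural restriction map
\[
T\colon \h^\infty \longrightarrow \prod_n \h^\infty/B_{A_n} \h^\infty,\qquad T\varphi = (\varphi + B_{A_n} \h^\infty)_n,
\]
with the codomain normed by the supremum of the quotient norms, is surjective onto the $\ell^\infty$-direct sum by hypothesis; the open mapping theorem then produces a uniform interpolation constant $M>0$. Applying this to the ``Kronecker'' targets (the class of $1$ at position $n$, the class of $0$ elsewhere) yields functions $f_n \in \h^\infty$ with $\|f_n\|_\infty \le M$, $f_n \equiv 1 \pmod{B_{A_n}}$ and $f_n \equiv 0 \pmod{B_{A_k}}$ for $k \ne n$. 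Factoring out the forced Blaschke divisors, $f_n = g_n \prod_{k\ne n} B_{A_k}$ with $\|g_n\|_\infty \le M$, gives the pointwise bound $|f_n(z)| \le M \prod_{k\ne n} |B_{A_k}(z)|$ on $\D$. Since $f_n(A_n) = I$ forces $f_n(\lambda) = 1$ at every eigenvalue $\lambda$ of $A_n$, condition \eqref{eqn:ssm} holds with constant $1/M$ at any $z \in \bigcup_n \mathrm{spec}(A_n)$; extending this lower bound to all of $\D$ via a normal-families / maximum-principle argument, or equivalently via a Carleson-lemma estimate on $B_A := \prod_n B_{A_n}$ away from the spectra, then yields \eqref{eqn:ssm} in full.

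\paragraph{Sufficiency.} For the converse, assuming \eqref{eqn:ssm}, I would build peak functions $F_n \in \h^\infty$ satisfying $F_n(A_n) = I$ and $F_n(A_k) = 0$ for $k \ne n$ via the ansatz $F_n = g_n \prod_{k\ne n} B_{A_k}$, with $g_n$ obtained by inverting $\prod_{k\ne n} B_{A_k}$ on the finite-dimensional model space $\mathcal K_{B_{A_n}} = \h^2 \ominus B_{A_n} \h^2$. Condition \eqref{eqn:ssm} is precisely what should guarantee that each such $g_n$ admits a bounded $\h^\infty$ extension with norm uniform in $n$. The candidate interpolant of a bounded target $(\phi_n)$ is then
\[
\varphi := \sum_n \phi_n F_n,
\]
and convergence to an $\h^\infty$ function should reduce to a weighted Carleson-measure estimate on the union of spectra $\bigcup_n \mathrm{spec}(A_n)$, paralleling the scalar argument in Garnett.

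\paragraph{Main obstacle.} The technical heart of both directions is translating the pointwise bound \eqref{eqn:ssm} on $\D$ into operator-theoretic control of the matrices $\prod_{k\ne n} B_{A_k}(A_n)$ acting on the model spaces $\mathcal K_{B_{A_n}}$. In the scalar case these are literally the same quantity, but the matrix case requires Hermite-type interpolation up to the order of each eigenvalue, together with a careful comparison between $\|\psi(A)^{-1}\|$ and the local behaviour of $|\psi|$ near $\mathrm{spec}(A)$ for inner factors $\psi$. Compounding this is the fact that the dimensions of the $A_n$ are not assumed uniformly bounded, so every estimate must be dimension-free---a constraint absent from the classical scalar theory, and the step I expect to consume most of the work.
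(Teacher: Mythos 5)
The paper's proof is far shorter than what you sketch. After observing that $f(A_n)=\phi_n(A_n)$ is equivalent to $f-\phi_n\in B_{A_n}\h^\infty$ (this follows from the explicit formula \eqref{eqn:jblock} for a function of a Jordan block, together with the fact that $B_{A_n}$ vanishes at each eigenvalue to exactly the order of its largest block), the matrix interpolation problem becomes the generalized interpolation problem \eqref{eqn:genint} with $\Theta_n=B_{A_n}$, and Theorem \ref{theo:intmat} is then literally a restatement of Nikolski's Theorem \ref{theo:nik}. The only new content in this theorem is the Jordan-block reduction, which you have essentially also identified (it is implicit in your use of $f\equiv 1\pmod{B_{A_n}}$ and your ``Hermite-type interpolation'' remark); the rest of what you propose amounts to re-proving Nikolski's theorem from scratch, a much larger undertaking that the paper deliberately avoids.

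Beyond that scope issue, your necessity argument has a genuine gap. The open-mapping/factorization step correctly yields $|f_n(z)|\le M\prod_{k\ne n}|B_{A_k}(z)|$ for all $z$, and hence $\prod_{k\ne n}|B_{A_k}(\lambda)|\ge 1/M$ at each eigenvalue $\lambda$ of $A_n$. But the proposed passage ``extending this lower bound to all of $\D$ via a normal-families / maximum-principle argument'' is exactly what the paper shows cannot be done in general: Theorem \ref{theo:notunif} produces, for any unbounded multiplicity sequence, a $\Lambda$ satisfying the node condition \eqref{eqn:ssmult} but failing the uniform condition \eqref{eqn:ussm}. A lower bound at the spectra therefore does not imply \eqref{eqn:ssm}; one must exploit the interpolating hypothesis at points $z$ \emph{away} from $\bigcup_n\mathrm{spec}(A_n)$, and there is no maximum-principle or normal-families shortcut. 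This is precisely the hard direction of Theorem \ref{theo:nik}, which is why the paper cites it rather than re-deriving it.

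Your sufficiency sketch is close in spirit to the P.~Beurling functions of Section \ref{sec:beu}, but note that the paper obtains the summability bound \eqref{eqn:beu} \emph{after} interpolation is established, via the roots-of-unity averaging trick and the interpolation constant, not directly from \eqref{eqn:ssm} through the ansatz $F_n=g_n\prod_{k\ne n}B_{A_k}$. Controlling $\|g_n\|_\infty$ uniformly and then summing $\sum_n\phi_nF_n$ directly from \eqref{eqn:ssm} is again essentially Nikolski's argument. For the purposes of this paper, the correct move is to prove the equivalence between $\varphi(A_n)=\phi_n(A_n)$ and $\varphi-\phi_n\in B_{A_n}\h^\infty$ via the Jordan form, and then invoke Theorem \ref{theo:nik}.
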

In section \ref{sec:sep} we will see how condition \eqref{eqn:ssm} relates to \eqref{eqn:ss}, and we will prove that a sequence of matrices satisfying \eqref{eqn:ssm} actually exists, for any sequence of dimensions.\\
 Section \ref{sec:intspec} will then translate the interpolation problem for matrices to a generalized interpolation problem for their spectra, and give a proof of Theorem \ref{theo:intmat}.\\
In Section \ref{sec:beu}, we will adapt the argument in \cite[Ch. VII, Th. 2.2]{gar} to construct the so called \emph{P.Beurling functions} associated to a sequence of interpolating matrices.\\
 Section \ref{sec:kernels} will deal with a different approach to separation in the unit disk by looking at model spaces in $\h^2$:  let $(A_n)_{n\in\N}$ be a sequence of matrices, with associated Blaschke factors
\[
B_{A_n}=\prod_{j=1}^{k_n}b_{\lambda_{n, j}}^{m_{n, j}}\qquad n\in\N.
\]
Define for any $n$ in $\N$ the {\bf model space} of $A_n$ as
\begin{equation}
\label{eqn:model}
H_n:=\h^2\ominus B_{A_n}\h^2.
\end{equation}
This will allow us to give another characterization for a sequence of matrices to be interpolating. In particular, one can make sense of a notion of separation for subspaces of a Hilbert space: if $L$ and $K$ are two closed subspaces of a Hilbert space $\mathcal{H}$, the sine between $L$ and $K$ is 
\[
\sin(L, K):=\inf\{\dist(k, L)\,\, | \,\, k\in K, ||k||=1\}.
\]
Namely, $\sin(L, K)$ is the smallest choice for the sine of an angle between a vector of $L$ and a vector of $K$.

\begin{defi}
\label{defi:sep:model}
A sequence $(K_n)_{n\in\N}$ of subspaces of a Hilbert space  is strongly separated if 
\begin{equation}
\label{eqn:sep:model}
\inf_{n\in\N}\sin\left(K_n\,\, ,\,\, \underset{j\ne n}{\overline{\spa}}\{K_j\}\right)>0;
\end{equation}
\end{defi}
Theorem \ref{theo:dist:model} will show that such a separation condition for the model spaces of $(A_n)_{n\in\N}$ coincide with the one given in \eqref{eqn:ssm}. Therefore,
\begin{theo}
A sequence $(A_n)_{n\in\N}$ of matrices with spectra in the unit disk is interpolating if and only if their model spaces are strongly separated.
\end{theo}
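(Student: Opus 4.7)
The plan is to obtain this theorem as a direct consequence of two previously stated results: Theorem \ref{theo:intmat} identifies interpolation of $(A_n)_{n\in\N}$ with the pointwise Blaschke separation condition \eqref{eqn:ssm}, while Theorem \ref{theo:dist:model} asserts that \eqref{eqn:ssm} is equivalent to the geometric separation \eqref{eqn:sep:model} of the model spaces $(H_n)_{n\in\N}$. Chaining these two equivalences yields the statement, so the only substantive content to establish is Theorem \ref{theo:dist:model}.

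For that equivalence the natural approach is to translate the subspace sine into a concrete $\h^2$ computation. Each $B_{A_j}$ is a finite Blaschke product, and a short inspection of \eqref{eqn:ssm} shows it forces the matrices to have pairwise disjoint spectra, so under the Blaschke summability that \eqref{eqn:ssm} entails the infinite product $\theta_n:=\prod_{j\ne n}B_{A_j}$ is a well defined inner function with $\bigcap_{j\ne n}B_{A_j}\h^2=\theta_n\h^2$. Consequently $\underset{j\ne n}{\overline{\spa}}\{H_j\}=\h^2\ominus\theta_n\h^2$, and a standard projection computation (using that $M_{\theta_n}$ is an isometry onto $\theta_n\h^2$) gives
\[
\sin\!\left(H_n,\;\underset{j\ne n}{\overline{\spa}}\{H_j\}\right)=\inf_{k\in H_n,\,\|k\|=1}\|P_{\theta_n\h^2}k\|=\inf_{k\in H_n,\,\|k\|=1}\|M_{\theta_n}^*k\|,
\]
that is, the smallest singular value of the compression of the coanalytic Toeplitz operator $M_{\theta_n}^*$ to the finite-dimensional model space $H_n$. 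The problem is reduced to showing that this singular value, taken uniformly in $n$, is bounded away from zero exactly when \eqref{eqn:ssm} holds.

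The hard step is precisely this singular-value estimate. Its scalar prototype is the Shapiro--Shields computation, in which $H_n$ is one-dimensional, spanned by $k_{\lambda_n}$, and the singular value equals $|\theta_n(\lambda_n)|$, producing \eqref{eqn:ss} on the nose. In the matrix case $H_n$ has higher finite dimension, spanned by reproducing kernels together with their derivatives at the eigenvalues of $A_n$ (matching the Jordan structure of $A_n^*$), and one must relate the smallest singular value of $M_{\theta_n}^*|_{H_n}$ to the Riesz--Dunford evaluation $\theta_n(A_n)$; that is, to the size of $\theta_n$ on the spectrum of $A_n$ with Jordan multiplicities taken into account. Making this comparison sharp is where the $\sup_n$ and $\inf_z$ of \eqref{eqn:ssm} interact: \eqref{eqn:ssm} does not ask for a pointwise evaluation at the eigenvalues of $A_n$ but for the existence, at each $z\in\D$, of at least one index $n$ whose Blaschke product keeps the others away from $z$. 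Once that uniform comparison is established, the equivalence of \eqref{eqn:ssm} and \eqref{eqn:sep:model} follows, and the theorem with it.
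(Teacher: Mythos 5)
Your top-level structure agrees with the paper's: the statement does follow by chaining Theorem \ref{theo:intmat} with the Corollary at the end of Section \ref{sec:kernels}, and the identification $\underset{j\ne n}{\overline{\spa}}\{H_j\}=\h^2\ominus\theta_n\h^2$ together with
\[
\sin\bigg(H_n\,,\,\underset{j\ne n}{\overline{\spa}}\{H_j\}\bigg)=\inf\{\|M_{\theta_n}^*k\|\,:\,k\in H_n,\ \|k\|=1\}
\]
is a correct reformulation of the sine (using that $M_{\theta_n}$ is an isometry onto $\theta_n\h^2$). So the reduction itself is sound and matches the paper's.

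What you have not done is prove the reduction's remaining claim, and that claim is the entire content of the theorem. The comparison between the smallest singular value of $M_{\theta_n}^*|_{H_n}$ and condition \eqref{eqn:ssm} is exactly what the paper establishes as Theorem \ref{theo:dist:model}, whose proof depends on two ingredients you never invoke: the ``close'' direction (part (i)) uses the exact distance formula $\dist(k_z/\|k_z\|\,,\,\h^2\ominus\Theta\h^2)=|\Theta(z)|$ of Theorem \ref{theo:dkernels} (itself from Lemmas \ref{lemma:dist} and \ref{lemma:h2}), and the ``positive angle'' direction (part (ii)) is an application of Carleson's corona Theorem (Theorem \ref{theo:corona}). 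The corona theorem is where the real analytic depth lies, and your proposal supplies no substitute for it. Moreover, your stated plan to ``relate the smallest singular value of $M_{\theta_n}^*|_{H_n}$ to the Riesz--Dunford evaluation $\theta_n(A_n)$'' points in a misleading direction: Theorem \ref{theo:notunif} shows that when the Jordan orders are unbounded, controlling $\theta_n$ and its derivatives on the spectrum of $A_n$ (which is precisely the data in $\theta_n(A_n)$) is \emph{strictly weaker} than the uniform-in-$z$ condition \eqref{eqn:ssm}, so the singular value cannot be matched to such pointwise data. You acknowledge the tension, but ``once that uniform comparison is established'' is exactly the point at which the theorem would need to be proved; as written the proposal stops there.
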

To conclude, in Section \ref{sec:bessel} we ask if the well known characterization of interpolating sequences given in term of Carleson measures extends to sequences of matrices.\\

The author would like to thank John M$^{\text{c}}$Carthy for the constant and valuable help during the development of this work. 
\section{Separation in the Unit Disc}
\label{sec:sep}
Let $(\lambda_n)_{n\in\N}$ be a sequence in $\D$. Note that no such sequence is strongly (or weakly) separated if some terms are repeated. If we want to take into account multiplicities, a natural generalization for strong separation is
\begin{equation}
\label{eqn:ssmult}
\inf_{n\in\N}\prod_{k\ne n}\rho(\lambda_n,\lambda_k)^{m_k}>0,
\end{equation}
where $m_k$ is the multiplicity of $\lambda_k$. A more demanding generalization of \eqref{eqn:ss} is due to Nikolski \cite[Th. 3.2.14]{nik} and yields  a uniform lower bound for such a product that has to be checked for all the points of $\D$:
\begin{defi}[Uniform Strong Separation]
Let $\Lambda=(\lambda_n)_{n\in\N}$ be a sequence in $\D$, with a given sequence $(m_n)_{n\in\N}$ of  multiplicities. Then $\Lambda$ is uniformly strongly separated if
\begin{equation}
\label{eqn:ussm}
\inf_{z\in\D}\sup_{n\in\N}\prod_{k\ne n}\rho(z,\lambda_k)^{m_k}>0.
\end{equation}
\end{defi}
It can be easily checked that inequality \eqref{eqn:ssmult} is inequality \eqref{eqn:ussm} for $z$ in $\Lambda$. Therefore, a uniformly strongly separated sequence is strongly separated. Before asking ourselves if, or under which conditions, a strongly separated sequence is uniformly strongly separated, we should check that such sequences exist:
\begin{theo}
\label{theo:exuss}
For any sequence of multiplicities $(m_n)_{n\in\N}$ and for any $0<\delta<1$, there exists a sequence $(\lambda_n)_{n\in\N}$ such that
\[
\inf_{z\in\D}\sup_{n\in\N}\prod_{k\ne n}\rho(z,\lambda_k)^{m_k}\geq\delta.
\] 
\end{theo}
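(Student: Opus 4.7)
The plan is to construct $(\lambda_n)$ explicitly by placing $\lambda_n = 1 - r_n$ on the positive real axis, where $(r_n) \subset (0,1)$ is a rapidly decreasing positive sequence chosen inductively in terms of $(m_n)$ and $\delta$. A direct computation yields
\[
\rho(\lambda_m, \lambda_n) \geq 1 - \frac{2\min(r_m, r_n)}{\max(r_m, r_n)} \qquad (m\ne n),
\]
so sufficiently fast decay of the ratios $r_{n+1}/r_n$ forces every pairwise pseudo-hyperbolic distance to be as close to $1$ as desired.

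The decisive step is upgrading strong separation at the points $\lambda_n$ themselves to the uniform separation over all of $\D$. Given $z \in \D$, let $n^* = n^*(z)$ minimize $n \mapsto \rho(z, \lambda_n)$; this minimum is attained since $|\lambda_n| \to 1$ forces $\rho(z, \lambda_n) \to 1$. Writing $d_\h := \tanh^{-1}\rho$ for the Poincar\'e metric on $\D$, the defining property of $n^*$ gives $d_\h(z, \lambda_{n^*}) \leq d_\h(z, \lambda_k)$ for every $k \ne n^*$, which combined with the triangle inequality $d_\h(\lambda_{n^*}, \lambda_k) \leq d_\h(\lambda_{n^*}, z) + d_\h(z, \lambda_k)$ yields
\[
d_\h(z, \lambda_k) \geq \tfrac{1}{2}\, d_\h(\lambda_{n^*}, \lambda_k).
\]
Translating via the half-angle identity $\tanh(x/2) = \tanh(x)/(1+\sqrt{1-\tanh^2(x)})$ then gives, writing $\eta_{n,k} := 1 - \rho(\lambda_n, \lambda_k)$,
\[
1 - \rho(z, \lambda_k) \leq \eta_{n^*, k} + \sqrt{2\, \eta_{n^*, k}}.
\]
Applying $\log(1-x) \geq -2x$ termwise to $\prod_{k\ne n^*}\rho(z,\lambda_k)^{m_k}$ reduces the theorem to arranging $\sup_n \sum_{k \ne n} m_k \sqrt{\eta_{n,k}} \leq \tfrac{1}{4}\log(1/\delta)$.

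Using the above bound on $\eta_{n,k}$, this sum splits as
\[
\sqrt{r_n}\sum_{k<n} m_k/\sqrt{r_k} \; + \; (1/\sqrt{r_n}) \sum_{k>n} m_k \sqrt{r_k}.
\]
I would allocate a geometric budget $b_n = c(\delta)\, 2^{-n}$ and pick $r_n$ inductively so that, at every step $n$, both $\sqrt{r_n}\sum_{k<n} m_k/\sqrt{r_k} \leq b_n$ and $m_n \sqrt{r_n/r_{n-1}} \leq b_n$ hold; each is an upper bound on $r_n$ and hence enforceable. The second condition telescopes to give $(1/\sqrt{r_\ell})\sum_{k>\ell} m_k \sqrt{r_k} \leq c(\delta)\, 2^{-\ell}$ uniformly in $\ell$, which is exactly what is needed to control the future-tail contribution.

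The main obstacle is this inductive construction: every choice of $r_n$ is coupled with all future choices through the tail $\sum_{k > n} m_k \sqrt{r_k}$, and the multiplicities $(m_n)$ are arbitrary and may grow very fast. The geometric-budget scheme breaks this coupling at the price of an extremely fast decay of $(r_n)$, but it still requires careful bookkeeping to verify that the inductive constraints are simultaneously satisfiable for every $n$.
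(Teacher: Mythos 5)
Your proof is correct in outline and takes a genuinely different route from the paper. The paper's argument is soft: it fixes $1<\nu<1/\delta$ and runs an induction directly on the quantity $\inf_{z\in\D}\max_{l\leq n}\prod_{k\ne l}|B_k(z)|$, exploiting the identity
\[
\inf_{z\in\D}\max_{l\leq n+1}\prod_{k\ne l}|B_k(z)|=\inf_{z\in\D}\max\Big\{\prod_{k\leq n}|B_k(z)|\,,\ |B_{n+1}(z)|\max_{l\leq n}\prod_{k\ne l}|B_k(z)|\Big\}
\]
together with the Montel fact that $|b_\tau|\to1$ uniformly on compacta as $\tau\to\de\D$; each new point costs a multiplicative budget $\nu^{-2^{-n}}$, and the infinite product of budgets converges to $\nu^{-1}$, yielding the bound $\delta$. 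That argument never commits to a location for the $\lambda_n$ and, as noted in the remark that follows it, runs verbatim for arbitrary finite Blaschke products $B_k$, which the paper uses to produce strongly separated sequences of matrices of any dimensions. Your construction is hands-on: you place $\lambda_n=1-r_n$ radially, pass from a uniform statement over $z\in\D$ to a pairwise statement via the hyperbolic triangle inequality $d_\h(z,\lambda_k)\geq\tfrac12\,d_\h(\lambda_{n^*},\lambda_k)$ and the half-angle estimate $1-\rho(z,\lambda_k)\leq\eta_{n^*,k}+\sqrt{2\eta_{n^*,k}}$, and then control $\sum_{k\ne n}m_k\sqrt{\eta_{n,k}}$ by a geometric budget $b_n$; the split into a past sum and a future tail, the telescoping $m_k\sqrt{r_k}/\sqrt{r_\ell}\leq m_k\sqrt{r_k/r_{k-1}}\leq b_k$ for $k>\ell$, and the enforceability of both constraints as upper bounds on $r_n$ all check out. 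The trade-off: your version produces a completely explicit sequence with quantitative control of $r_n$ and avoids Montel, but it is tied to the single-eigenvalue shape $B_k=b_{\lambda_k}^{m_k}$ and to a radial configuration, so it does not immediately give the generalization in the paper's remark; the paper's version is shorter and more portable. One detail you should add: the estimate $\log(1-x)\geq-2x$ requires $x\leq1/2$, so you must also ensure $\eta_{n,k}+\sqrt{2\eta_{n,k}}\leq1/2$ termwise, not just that the weighted sum is small; since every $m_k\geq1$, this does follow from choosing $c(\delta)$ small, but it should be stated because an unbounded multiplicity sequence could otherwise hide a single large $\eta_{n,k}$.
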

\begin{proof}
Fix $\delta<1$ and $1<\nu<1/\delta$. Set $B_k:=b_{\lambda_k}^{m_k}$. Observe that
\[
\rho(z,\lambda_k)^{m_k}=|B_k(z)|,\qquad k\in\N.
\]
The proof now proceeds inductively:
\begin{itemize}
\item \emph{There exist $\lambda_1$ and $\lambda_2$ in $\D$ so that
\[
\inf_{z\in\D}\max_{k=1,2}|B_k(z)|\geq\delta\nu.
\]}
To see that it is true, choose any $\lambda_1$ in $\D$, and then a radius $r<1$ such that 
\[
|B_1(z)|\geq\delta\nu,\qquad z\in\{ r\leq|z|\leq 1\}.
\]
It suffices then to choose $\lambda_2$ close enough to $\de\D$ so that
\[
|B_2(z)|\geq\delta\nu\qquad |z|\leq r,
\]
by observing that the family $(b_\tau)_{\tau\in\D}$ is locally uniformly bounded, and therefore by Montel's Theorem 
\[
\lim_{n\to\infty}|b_{\tau_n}|=1
\]
uniformly on compact subsets of $\D$, for some sequence $(\tau_n)_{n\in\N}$ approaching the unit circle.
\item \emph{If $\lambda_1,\dots,\lambda_n$ are points in $\D$ so that
\[
\inf_{z\in\D}\max_{l=1,\dots, n}\prod_{k\ne l}|B_k(z)|\geq\gamma>0,
\]
then there exists a point $\lambda_{n+1}$ such that
\[
\inf_{z\in\D}\max_{l=1,\dots, n+1}\prod_{k\ne l}|B_k(z)|\geq\frac{\gamma}{\nu^{2^{-n}}}.
\]}
To prove it, observe that
\[
\inf_{z\in\D}\max_{l=1,\dots, n+1}\prod_{k\ne l}|B_k(z)|=\inf_{z\in\D}\max\bigg\{\prod_{k=1}^n|B_k(z)|\,;\,|B_{n+1}(z)|\max_{l=1,\dots, n}\prod_{k\ne l} |B_k(z)| \bigg\} .
\]
Then it suffices to find a radius $r_n$ so that
\[
\prod_{k=1}^n|B_k(z)|\geq\gamma\qquad z\in\{r_n\leq|z|<1\}
\]
and $\lambda_{n+1}$ so that 
\[
|B_{n+1}(z)|\geq\nu^{-2^{-n}}\qquad |z|\leq r_n.
\]
\end{itemize}
This inductive construction shows that the sequence $(\lambda_{n})_{n\in\N}$ that one can build in this fashion satisfies
\[
\inf_{z\in\D}\max_{l=1,\dots,n}\prod_{k\leq n, k\ne l}|B_k(z)|\geq\delta\nu^{1-1/2-\dots-1/2^n},\qquad n\in\N.
\]
To conclude, let $n$ going to $\infty$.
\end{proof}
\begin{rem}
One can note that in the above proof we used only the fact that each $B_k$ is a finite Blaschke product. Therefore the same argument can be applied to the sequence of inner functions
\[
B_k:=\prod_{j=1}^{n_k}b_{\lambda_{k, j}}^{m_{k, j}},
\]
for any sequence $(n_{k})_{k\in\N}$ and $\{m_{k, j}\,\,|\,\, j=1,\dots, n_k\}_{k\in\N}$. This gives that a sequence of matrices $(A_n)_{n\in\N}$ satisfying \eqref{eqn:ssm} exists, for any choice that we can make of their dimensions.
\end{rem}
Nikolski gave in [Th. 3.2.14.]{nik} an analogue of \eqref{eqn:ussm} for any convergent product of inner functions, and related it to a generalized interpolation problem:
\begin{theo}
\label{theo:nik}
Let $(\Theta_n)_{n\in\N}$ be a sequence of inner functions in $\D$ so that $\Theta:=\prod_{n\in\N}\Theta_n$ is an inner function. Then the following are equivalent:
\begin{description}
\item[(i)]There exists a positive $\delta$ such that
\[
\inf_{z\in\D}\sup_{n\in\N}\prod_{k\ne n}|\Theta_k(z)|\geq\delta;
\]
\item[(ii)] For any bounded sequence $(\phi_n)_{n\in\N}$ in $\h^\infty$ there exists a $\varphi$ in $\h^\infty$ such that
\begin{equation}
\label{eqn:genint}
\varphi-\phi_n\in\Theta_n\h^\infty,\qquad n\in\N.
\end{equation}
\end{description}
\end{theo}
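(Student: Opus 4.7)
The plan is to treat the two implications by the usual open-mapping / constructive split familiar from the scalar Carleson theorem, with the extra layer that the peaking functions must now be adapted to arbitrary inner divisors rather than simple Blaschke factors.

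For (ii) $\Rightarrow$ (i), introduce the linear map
\[
T:\h^\infty\longrightarrow\ell^\infty\bigl(\h^\infty/\Theta_n\h^\infty\bigr),\qquad\varphi\longmapsto(\varphi+\Theta_n\h^\infty)_n,
\]
which is bounded of norm $\leq 1$. Hypothesis (ii) asserts that $T$ is surjective onto bounded sequences, so the open mapping theorem yields $M>0$ such that every bounded $(\phi_n)$ admits a preimage $\varphi$ with $\|\varphi\|_\infty\leq M\sup_n\|\phi_n\|_\infty$. Specializing to $\phi_N=1$ and $\phi_n=0$ for $n\neq N$ produces $F_N\in\h^\infty$ with $\|F_N\|_\infty\leq M$, $F_N-1\in\Theta_N\h^\infty$, and $F_N\in\prod_{k\neq N}\Theta_k\cdot\h^\infty$. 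Factoring $F_N=(\prod_{k\neq N}\Theta_k)g_N=1+\Theta_N h_N$ with $g_N,h_N\in\h^\infty$ of norm $\leq M+1$ (using that $|\Theta_n|=1$ almost everywhere on $\de\D$), one obtains the Bezout identity
\[
\Bigl(\prod_{k\neq N}\Theta_k\Bigr)(z)\,g_N(z)-\Theta_N(z)\,h_N(z)=1,\qquad z\in\D.
\]
Evaluating at a suitably chosen index $N$ (for instance one where $|\Theta_N(z)|$ is as small as possible, exploiting that $|\Theta_n(z)|\to 1$ for every fixed $z$ since $\prod\Theta_n$ converges to an inner function) yields the desired $\sup_N\prod_{k\neq N}|\Theta_k(z)|\geq\delta$.

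For (i) $\Rightarrow$ (ii), the strategy is to construct uniformly bounded Beurling-type functions $F_n\in\h^\infty$ satisfying the delta relations $F_n-1\in\Theta_n\h^\infty$ and $F_n\in\Theta_k\h^\infty$ for $k\neq n$, with $\sup_n\|F_n\|_\infty\leq C(\delta)$. Writing $F_n=(\prod_{k\neq n}\Theta_k)G_n$ reduces the problem to solving an $\h^\infty$-valued Bezout equation modulo $\Theta_n$, and (i) supplies the quantitative lower bound that makes this solvable. Once the $F_n$ are in hand, the interpolant for a bounded sequence $(\phi_n)$ is assembled as
\[
\varphi=\sum_n \phi_n F_n,
\]
for which the relations $\varphi-\phi_n=\phi_n(F_n-1)+\sum_{k\neq n}\phi_k F_k\in\Theta_n\h^\infty$ follow tautologically from the delta property. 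Convergence of the series in $\h^\infty$ is secured by the standard $\h^1$ duality argument combined with Carleson-measure estimates on the zero divisor of $\Theta=\prod_n\Theta_n$, in direct analogy with Carleson's original scalar proof.

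The main obstacle is the construction of the uniformly bounded Beurling functions $F_n$ from (i) alone. For scalar Blaschke sequences this is precisely Carleson's original theorem; for general inner functions one must treat the Blaschke and singular factors of each $\Theta_n$ separately, typically by passing to the model space $K_\Theta=\h^2\ominus\Theta\h^2$ and exploiting its reproducing-kernel structure. This is exactly the technology announced in Section \ref{sec:kernels} of the paper; once it is in place, the remainder of (i) $\Rightarrow$ (ii) is a formal manipulation.
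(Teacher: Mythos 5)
The paper does not actually prove Theorem~\ref{theo:nik}: it is imported wholesale from Nikolski \cite[Th.~3.2.14]{nik}, and the remainder of the paper is built on top of it as a black box. So there is no in-paper proof to compare your attempt against; I can only assess your sketch on its own terms and against what is known about Nikolski's argument.

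Your direction (ii)~$\Rightarrow$~(i) begins correctly with the open-mapping reduction and the peaking data $\phi_N=1$, $\phi_k=0$ for $k\neq N$. Two points need repair. First, the passage from ``$F_N\in\Theta_k\h^\infty$ for every $k\neq N$'' to the factorization $F_N=(\prod_{k\neq N}\Theta_k)\,g_N$ with $\|g_N\|_\infty$ controlled uses that divisibility by each $\Theta_k$ implies divisibility by the product; this holds only if the $\Theta_k$ are pairwise relatively prime inner functions. You should note explicitly that (ii) forces this (if $\Theta_n$ and $\Theta_m$ shared a nontrivial common inner divisor $u$, the data $\phi_n=0$, $\phi_m=1$ would be uninterpolable, since any candidate $\varphi$ would lie in $u\h^\infty$ while $\varphi-1$ would too). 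Second, and more seriously, the endgame is not finished. The Bezout identity gives $\prod_{k\neq N}|\Theta_k(z)|\,\|g_N\|_\infty\geq 1-|\Theta_N(z)|\,\|h_N\|_\infty$, which yields a lower bound only in the regime $|\Theta_N(z)|\leq \tfrac{1}{2(M+1)}$. Choosing $N$ to minimize $|\Theta_N(z)|$, the complementary case is that \emph{every} $|\Theta_k(z)|$ exceeds $\tfrac{1}{2(M+1)}$; this by itself does not bound $\prod_{k\neq N}|\Theta_k(z)|$ from below (one still needs a uniform lower bound on $|\Theta(z)|$ in this regime), and you give no argument for it.

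Your direction (i)~$\Rightarrow$~(ii) is essentially a restatement of the goal rather than a proof. ``Construct uniformly bounded Beurling functions $F_n$ from (i) alone, then sum up'' is exactly the content of the hard half of the theorem, and the phrase ``secured by the standard $\h^1$ duality argument combined with Carleson-measure estimates on the zero divisor of $\Theta$'' does not transfer straightforwardly from the scalar Blaschke setting: the $\Theta_n$ here are arbitrary inner functions, possibly with singular parts and unbounded degree, and there is no obvious ``zero divisor'' Carleson measure playing the role of $\sum(1-|\lambda_n|)\delta_{\lambda_n}$. The paper's own Section~\ref{sec:beu} constructs Beurling-type functions, but it does so \emph{assuming} the sequence is already interpolating, via a normal-families and roots-of-unity trick; that section does not produce them from (i). Nikolski's actual route is operator-theoretic: he establishes (and the paper records this as condition~\eqref{eqn:model:riesz}) the equivalence of (i), (ii), and the statement that $(\h^2\ominus\Theta_n\h^2)_n$ forms a Riesz system of subspaces, and the analytic heart of the proof lives in the model-space/commutant-lifting machinery rather than in an explicit P.~Beurling construction. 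So even once the gaps above are filled, your sketch would be a genuinely different (and currently incomplete) route, not a reproduction of the cited argument.
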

Observe that, if $(\lambda_{n})_{n\in\N}$ is a sequence in $\D$ and $\Theta_n=b_{\lambda_n}^{m_n}$, then condition (ii) of Theorem \ref{theo:nik} is an interpolating property for $\h^\infty$ including derivatives. Namely, (ii) holds if and only if for any bounded sequence $(\phi_n)_{n\in\N}$ in $\h^\infty$ there exists a bounded analytic function $\varphi$ so that
\[
\varphi^{(j)}(\lambda_n)=\phi_n^{(j)}(\lambda_n),
\]
for any $n$ in $\N$ and $j=0,\dots, m_n-1$. \\
It is not difficult to show that, if the sequence of multiplicities $(m_n)_{n\in\N}$ is bounded, then any strongly separated sequence is actually uniformly strongly separated: a proof can be found in \cite[Lemma 3.2.18.]{nik}. Therefore, Theorem \ref{theo:nik} generalizes Carleson's interpolation Theorem for those interpolation problems in $\h^\infty$ that aim to specify $m_n$ derivatives at each $\lambda_n$.\\
On the other hand, if $(m_n)_{n\in\N}$ is unbounded, a strongly separated sequence is not in general uniformly strongly separated:
\begin{theo}
\label{theo:notunif}
Let $(m_n)_{n\in\N}$ be an increasing and unbounded sequence. Then there exists a sequence $\Lambda=(\lambda_n)_{n\in\N}$ in $\D$ whose sequence of multiplicities is $(m_n)_{n\in\N}$ and so that $\Lambda$ is strongly separated but not uniformly strongly separated.
\end{theo}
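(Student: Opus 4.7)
The plan is to construct $\Lambda$ explicitly on the positive real axis: set $\lambda_n:=1-\epsilon_n$, with $\epsilon_1\in(0,1/4)$ fixed and $\epsilon_{n+1}:=\epsilon_n/m_{n+1}$ for $n\geq 1$. Since $(m_n)$ is unbounded, after modifying finitely many initial terms we may assume $m_n\geq 2$ for all $n$, so $\epsilon_n\to 0$ super-geometrically fast. The heuristic is that this decay rate is ``just barely'' fast enough for strong separation and ``just barely'' slow enough that, at a suitable intermediate witness, the Blaschke factors associated to two consecutive points $\lambda_N,\lambda_{N+1}$ become small simultaneously, preventing the $\sup$ in \eqref{eqn:ussm} from removing them both.

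For real $\lambda_n=1-\epsilon_n$ and $\lambda_k=1-\epsilon_k$ a direct computation yields
\[
1-\rho(\lambda_n,\lambda_k)=\frac{\min(\epsilon_n,\epsilon_k)\,(2-\max(\epsilon_n,\epsilon_k))}{\epsilon_n+\epsilon_k-\epsilon_n\epsilon_k}\asymp\frac{\min(\epsilon_n,\epsilon_k)}{\max(\epsilon_n,\epsilon_k)}.
\]
For strong separation I would bound $\sum_{k\ne n}m_k(1-\rho(\lambda_n,\lambda_k))$ uniformly in $n$: for $k=n+j$ with $j\geq 1$ the contribution is comparable to $m_{n+j}\epsilon_{n+j}/\epsilon_n=\big(\prod_{i=1}^{j-1}m_{n+i}\big)^{-1}$, a geometric cascade summing to $O(1)$; the case $k=n-j$ is analogous, using that $(m_n)$ is increasing. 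Combining this with the elementary estimate $\prod_{k\ne n}\rho(\lambda_n,\lambda_k)^{m_k}\geq\exp\big(-C\sum_k m_k(1-\rho(\lambda_n,\lambda_k))\big)$ gives that the infimum in \eqref{eqn:ssmult} is positive, so $\Lambda$ is strongly separated.

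To witness the failure of \eqref{eqn:ussm} I would introduce $z_N:=1-\eta_N$ with $\eta_N:=\epsilon_N/\sqrt{m_N}$, an intermediate point between $\lambda_N$ and $\lambda_{N+1}$ chosen so as to balance the two neighboring contributions. A short computation then yields
\[
m_N\big(1-\rho(z_N,\lambda_N)\big)\asymp 2\sqrt{m_N}\qquad\text{and}\qquad m_{N+1}\big(1-\rho(z_N,\lambda_{N+1})\big)\asymp 2\sqrt{m_N},
\]
while the same telescoping estimate as above gives $\sum_{k\notin\{N,N+1\}}m_k(1-\rho(z_N,\lambda_k))=O(1/\sqrt{m_N})\to 0$. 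Since removing any single index $n$ from the sum still leaves at least one of the two $\sqrt{m_N}$-sized terms intact, one obtains
\[
\sup_{n\in\N}\prod_{k\ne n}\rho(z_N,\lambda_k)^{m_k}\leq\exp\big(-c\sqrt{m_N}\big)\xrightarrow[N\to\infty]{}0,
\]
which exhibits the failure of uniform strong separation.

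The main obstacle is choosing the decay of $\epsilon_n$ correctly. If $\epsilon_{n+1}/\epsilon_n$ tends to zero faster than $1/m_{n+1}$, then at any intermediate witness only one of the two neighboring Blaschke factors can contribute substantially, and the sup over $n$ in \eqref{eqn:ussm} removes it, restoring uniform strong separation; if it decays any slower, strong separation itself fails. The threshold choice $\epsilon_{n+1}/\epsilon_n=1/m_{n+1}$ is the unique regime where both neighbouring contributions at $z_N$ have the same order $\sqrt{m_N}$ and neither can be canceled by the sup, and this is precisely what drives the dichotomy expressed in the theorem.
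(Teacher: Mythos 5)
Your proof is correct and takes a genuinely more explicit route than the paper's. The paper proves a pseudohyperbolic identity (Lemma \ref{lemma:rho}) and then constructs the sequence abstractly via a greedy recursion (Lemma \ref{lemma:nu}): it forces $\rho(\lambda_n,\lambda_{n+1})^{m_{n+1}}=\nu$ for infinitely many $n$ while pushing the remaining points toward $\de\D$ to protect strong separation; the witnesses $\xi_n$ are then specified through ratios $t_n\to1$ with $t_n^{m_{2n-1}}\to0$, and Lemma \ref{lemma:rho} is what transfers the degeneracy at $\xi_n$ from one neighbor to the other. You instead write the sequence down in closed form, $\lambda_n=1-\epsilon_n$ with $\epsilon_{n+1}=\epsilon_n/m_{n+1}$, obtain strong separation from the telescoping bound $\sum_{k\ne n}m_k(1-\rho(\lambda_n,\lambda_k))=O(1)$ (legitimate because $\rho(\lambda_n,\lambda_k)$ is uniformly bounded away from $0$ once $m_k\geq2$), and display the witness $z_N=1-\epsilon_N/\sqrt{m_N}$ explicitly, computing that both neighboring contributions are $\asymp\sqrt{m_N}$ and thereby bypassing Lemma \ref{lemma:rho} entirely. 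The explicit construction makes fully visible the threshold mechanism $\epsilon_{n+1}/\epsilon_n\asymp1/m_{n+1}$ and the rate $\exp(-c\sqrt{m_N})$ at which the supremum collapses, at the cost of a little more elementary estimation; the paper's softer recursion avoids computation but is less quantitative. One small wrinkle you should repair: the phrase ``after modifying finitely many initial terms we may assume $m_n\geq2$'' is not literally admissible, since the theorem fixes the multiplicity sequence $(m_n)$. The standard fix is to run your recursion only on the tail $n\geq N_0$ where $m_n\geq2$, place the finitely many earlier $\lambda_n$ by hand so that they are pairwise well separated and sit well inside $\D$, and observe that a finite modification affects neither strong separation nor the asymptotic failure of uniform strong separation.
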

The proof has a strong geometric flavor: the first tool that is going to be used is the following property of the pseudo-hyperbolic metric:
\begin{lemma}
\label{lemma:rho}
Let $0<\lambda_1<\gamma<\lambda_2<1$, and suppose that $\rho(\gamma,\lambda_1)=t\rho(\lambda_1,\lambda_2)$, for some $0<t<1$. Then
\[
\rho(\gamma,\lambda_2)=\frac{1-t}{1-\rho^2(\lambda_1,\lambda_2)t}\rho(\lambda_1,\lambda_2).
\]
\end{lemma}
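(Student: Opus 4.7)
The natural approach is to exploit the Möbius-invariance of the pseudo-hyperbolic metric and reduce to a triple of real points one of which is the origin. Precisely, the plan is to apply the involutive Blaschke factor $b_{\lambda_1}$, which is a pseudo-hyperbolic isometry of $\D$ sending $\lambda_1$ to $0$, fixing the real axis, and preserving the ordering of real numbers up to sign (it is decreasing on $(-1,1)$ near $\lambda_1$). Set $g:=b_{\lambda_1}(\gamma)$ and $\ell:=b_{\lambda_1}(\lambda_2)$, both real and negative since $\gamma,\lambda_2>\lambda_1$, with $0<|g|<|\ell|<1$ because $\gamma$ lies pseudo-hyperbolically closer to $\lambda_1$ than $\lambda_2$ does.

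By the isometry property, the three quantities in the statement translate to $\rho(\gamma,\lambda_1)=|g|$, $\rho(\lambda_1,\lambda_2)=|\ell|$, and $\rho(\gamma,\lambda_2)=|b_g(\ell)|=\left|\frac{g-\ell}{1-\bar g\ell}\right|$. The hypothesis $\rho(\gamma,\lambda_1)=t\rho(\lambda_1,\lambda_2)$ becomes simply $|g|=t|\ell|$, so $g=-t|\ell|$ and $\ell=-|\ell|$.

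Plugging these in, the numerator becomes $|\ell-g|=(1-t)|\ell|$ and the denominator becomes $1-|g||\ell|=1-t|\ell|^{2}=1-t\rho(\lambda_1,\lambda_2)^{2}$, yielding
\[
\rho(\gamma,\lambda_2)=\frac{(1-t)|\ell|}{1-t|\ell|^{2}}=\frac{1-t}{1-t\rho^{2}(\lambda_1,\lambda_2)}\rho(\lambda_1,\lambda_2),
\]
as claimed. There is really no obstacle here beyond keeping track of signs; the computation collapses once the symmetry group of $\D$ is used to normalize $\lambda_1$ to the origin, which is precisely the reason $\rho$ was chosen over the Euclidean metric in the first place.
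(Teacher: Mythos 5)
Your proof is correct and essentially the same as the paper's: both exploit the involutive Blaschke factor $b_{\lambda_1}$ and the Möbius invariance of $\rho$ to reduce the claim to a one-line rational identity among real numbers. You push the three points forward under $b_{\lambda_1}$ so that $\lambda_1$ goes to the origin and then compute $\rho$ there, whereas the paper pulls $\gamma$ back as $b_{\lambda_1}(-s_1)$ and simplifies the composition $b_{\lambda_2}\circ b_{\lambda_1}$; these are the same calculation in different packaging.
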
 
\begin{proof}
Let $s_i=\rho(\lambda_i, \gamma)$, $i=1,2$. Then $s_1=-b_{\lambda_1}(\gamma)$, and therefore
\[
\gamma=b_{\lambda_1}(-s_1)=\frac{\lambda_1+s_1}{1+\lambda_1s_1},
\]
since Blaschke factors are their own inverses. Therefore 
\[
s_2=b_{\lambda_2}(\gamma)=b_{\lambda_2}(b_{\lambda_1}(-s_1))=\frac{\rho(\lambda_1,\lambda_2)-s_1}{1-\rho(\lambda_1,\lambda_2)s_1}.
\]
This, together with
\[
s_1=t\rho(\lambda_1, \lambda_2),
\]
concludes the proof.
\end{proof}
Of course, both \eqref{eqn:ssmult} and \eqref{eqn:ussm} are separation conditions for the sequence $\Lambda=(\lambda_n)_{n\in\N}$. Rather than proving that a uniform strongly separated sequence is more separated than a strongly separated one, the construction that will prove Theorem \ref{theo:notunif} is given by a sequence $\Lambda$ that lacks regularity:
\begin{lemma}
\label{lemma:nu}
Let $(m_n)_{n\in\N}$ be a sequence of multiplicities, and let $0<\nu<1$. Then there exists a strongly separated sequence $(\lambda_n)_{n\in\N}$ in $\D$ such that 
\begin{equation}
\label{eqn:nu}
\rho(\lambda_n,\lambda_{n+1})^{m_{n+1}}=\nu
\end{equation}
infinitely often.
\end{lemma}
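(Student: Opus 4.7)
My plan is to build $(\lambda_n)_{n\in\N}$ inductively, alternating ``standard'' placements where $\lambda_n$ is pushed close to $\partial\D$ to preserve separation, with ``special'' placements at a sparse subsequence of indices $n_1 < n_2 < \cdots$, at which $\lambda_{n_j+1}$ is placed exactly at pseudo-hyperbolic distance $\nu^{1/m_{n_j+1}}$ from $\lambda_{n_j}$. I first fix $\epsilon_n = 1/n^2$, so that $\eta := \prod_n (1-\epsilon_n) > 0$ and $c := \inf_n (1-\epsilon_n)^{n-1} > 0$, and select $(n_j)$ from the set $S := \{n \in \N : m_n \leq m_{n+1}\}$ with $n_{j+1} \geq n_j + 2$. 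The set $S$ is always infinite: otherwise $(m_n)$ would eventually be strictly decreasing in $\N$, which is impossible.

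The construction proceeds as follows. Given $\lambda_1, \dots, \lambda_{n-1}$, if $n = n_j + 1$ for some $j$ I set $\lambda_n$ to be any point with $\rho(\lambda_n, \lambda_{n-1}) = \nu^{1/m_n}$; otherwise I choose $\lambda_n$ close enough to $\partial\D$ so that $\rho(\lambda_n, \lambda_k)^{\max(m_n, m_k)} \geq 1 - \epsilon_n$ for every $k < n$. If moreover $n = n_j$ for some $j$, I require the stronger bound to hold uniformly on the closed pseudo-hyperbolic disk of radius $\nu^{1/m_{n+1}}$ around $\lambda_n$, with exponent $\max(m_n, m_k, m_{n+1})$. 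Both requirements are feasible because each $\rho(\,\cdot\,, \lambda_k)$ extends continuously to $\partial\D$ with boundary value $1$, and as $|\lambda_n| \to 1$ the Euclidean diameter of the prescribed disk vanishes while $b_{\lambda_k}$ is continuous up to $\partial\D$. The coupling between the head and the subsequent tail is the most delicate point: the head must be committed to before we know where the tail will land on its prescribed circle, which forces the disk-wide estimate rather than a pointwise one.

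To verify strong separation at any $n$, examine $\prod_{k\ne n}\rho(\lambda_n, \lambda_k)^{m_k}$. Every factor with $k < n$, except the special partner $k = n_j$ when $n = n_j+1$, satisfies $\rho(\lambda_n, \lambda_k)^{m_k} \geq 1 - \epsilon_n$: directly from the step-$n$ bound when $n$ is standard or a head, and inherited from the disk requirement at the head step $n-1$ when $n$ is a tail. Every factor with $k > n$, except the special partner, is at least $1 - \epsilon_k$ by the analogous argument applied at step $k$. The special-partner factor equals either $\nu$ (when $n = n_j$ and $k = n_j+1$) or $\nu^{m_{n_j}/m_{n_j+1}} \geq \nu$ (when $n = n_j+1$ and $k = n_j$, using $n_j \in S$). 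Multiplying,
\[
\prod_{k\ne n}\rho(\lambda_n, \lambda_k)^{m_k} \;\geq\; \nu \cdot (1-\epsilon_n)^{n-1} \cdot \prod_{k>n}(1-\epsilon_k) \;\geq\; \nu c \eta \;>\; 0,
\]
uniformly in $n$, while $\rho(\lambda_{n_j}, \lambda_{n_j+1})^{m_{n_j+1}} = \nu$ holds for every $j$ by construction, giving the required ``infinitely often'' property.
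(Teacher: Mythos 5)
Your proof is correct and follows the same overall strategy as the paper: alternate ``special'' placements, where a point is placed at prescribed pseudo-hyperbolic distance $\nu^{1/m_{n+1}}$ from its predecessor, with ``standard'' placements that push the next point close to $\partial\D$ to lock in separation. The paper does this by taking the points on the positive real axis, making \emph{every} odd--even pair $(\lambda_{2k-1},\lambda_{2k})$ a special pair, and then choosing $\lambda_{2k+1}$ (and hence $\lambda_{2k+2}$, which must exceed it) close enough to $1$.

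Where you genuinely improve on the paper: the paper's inductive condition (its displayed system for $l=1,\dots,2k$) hinges, for $l=2k$, on the factor $\rho(\lambda_{2k},\lambda_{2k-1})^{m_{2k-1}}=\nu^{m_{2k-1}/m_{2k}}$ being $\geq\nu$, which requires $m_{2k-1}\leq m_{2k}$. That is automatic in the application (Theorem \ref{theo:notunif} assumes $(m_n)$ increasing) but is \emph{not} part of the hypotheses of the lemma as stated. Your selection of special indices from $S=\{n:m_n\leq m_{n+1}\}$, together with the observation that $S$ must be infinite, makes the argument work for an arbitrary sequence of multiplicities, so your proof actually covers the lemma's full generality. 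You also make explicit a coupling that the paper leaves implicit: when the head $\lambda_{n_j}$ is committed, the tail $\lambda_{n_j+1}$ can land anywhere on a fixed pseudo-hyperbolic circle, so separation from all prior points must be guaranteed over the whole circle, not just at a single point; your disk-wide bound does exactly this, whereas the paper relies on the fact that its points are increasing on $[0,1)$.

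One small bookkeeping slip, which is harmless: when $k>n$ is a special tail $n_j+1$, the bound inherited from the disk requirement at the head step $n_j$ is $1-\epsilon_{n_j}=1-\epsilon_{k-1}$, not $1-\epsilon_k$ as you write. Replacing $\prod_{k>n}(1-\epsilon_k)$ with $\prod_{k>n}(1-\epsilon_{k-1})=\prod_{k\geq n}(1-\epsilon_k)$ gives a constant that is still positive, so the uniform lower bound $\nu c\eta>0$ survives unchanged in spirit.
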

\begin{proof}
Choose a positive $\lambda_1$, and then $\lambda_2>\lambda_1$ so that 
\[
\rho(\lambda_1,\lambda_2)^{m_2}=\nu.
\]
Then pick $\lambda_3>\lambda_2$ which is close enough to $1$ so that
\[
\begin{cases}
&\rho(\lambda_3,\lambda_1)^{m_1}\rho(\lambda_3,\lambda_2)^{m_2}\geq\frac{1}{2}\\
&\rho(\lambda_2,\lambda_3)^{m_3}\rho(\lambda_2,\lambda_1)^{m_1}\geq\nu\rho(\lambda_2,\lambda_3)^{m_3}\geq\nu~ 2^{-\frac{1}{2^3}}\\
&\rho(\lambda_1,\lambda_2)^{m_2}\rho(\lambda_1,\lambda_3)^{m_3}=\nu\rho(\lambda_1, \lambda_3)^{m_3}\geq \nu~2^{-\frac{1}{2^3}}
\end{cases},
\]
and $\lambda_4>\lambda_3$ so that $\rho(\lambda_3,\lambda_4)=\nu$. In this fashion, the choices of the even terms are going to be at distance $\nu$ from the previous odd, while an odd term $\lambda_{2k+1}$ will be chosen so that 
\begin{equation}
\label{eqn:partialss}
\begin{cases}
&\prod_{j=1}^{2k}\rho(\lambda_{2k+1},\lambda_j)^{m_j}\geq\frac{1}{2}\\
&\rho(\lambda_{2k+1},\lambda_l)^{m_{2k+1}}\prod_{j\leq 2k, j\ne l}\rho(\lambda_{l}, \lambda_j)^{m_j}\geq\nu~ 2^{\sum_{j=1}^k2^{-(2k+1)}}\qquad l=1,\dots, 2k.
\end{cases}
\end{equation}
We conclude the proof by letting $k$ going to $\infty$.
\end{proof}
We are now ready to prove Theorem \ref{theo:notunif}:
\begin{proof}[Proof of Theorem \ref{theo:notunif}]
 For any $n$ in $\N$, choose $\lambda_{2n-1}<\xi_n<\lambda_{2n}$ so that $\rho(\xi_n,\lambda_{2n-1})=t_n\rho(\lambda_{2n-1},\lambda_{2n})$, where 
\begin{equation}
\label{eqn:tn}
\lim_{n\to\infty}t_n=1
\end{equation}
but
\begin{equation}
\label{eqn:t_n}
\lim_{n\to\infty}t_n^{m_{2n-1}}=0.
\end{equation}
Observe that we can choose such a $(t_n)_{n\in\N}$ since $m_n$ goes to infinity. To prove the Theorem, it suffices to show that, if $\rho(\xi_n,\lambda_{2n})=s_n\rho(\lambda_{2n-1},\lambda_{2n})$, then
\begin{equation}
\label{eqn:s_n}
\lim_{n\to\infty}s_n^{m_{2n}}=0
\end{equation}
too. Indeed, if both equation \eqref{eqn:t_n} and equation \eqref{eqn:s_n} hold, $\xi_n$ is close enough to both $\lambda_{2n-1}$ and $\lambda_{2n}$ so that
\[
\rho(\xi_n, \lambda_{2n-i})^{m_{2n-i}}\underset{n\to\infty}{\to}0\qquad i=0, 1,
\]
and therefore \eqref{eqn:ussm} fails.\\
 Thanks to Lemma \ref{lemma:rho}, this is equivalent to showing that
\[
\lim_{n\to\infty}\bigg(\frac{1-t_n}{1-\rho^2(\lambda_{2n-1},\lambda_{2n})t_n}\bigg)^{m_{2n}}=\lim_{n\to\infty}\bigg(1-\frac{t_n(1-\rho^2(\lambda_{2n-1},\lambda_{2n}))}{1-\rho^2(\lambda_{2n-1}, \lambda_{2n})t_n}\bigg)^{m_{2n}}=0,
\]
which is true if and only if
\[
\lim_{n\to\infty}\frac{m_{2n}t_n(1-\rho^2(\lambda_{2n-1},\lambda_{2n}))}{1-\rho^2(\lambda_{2n-1},\lambda_{2n})t_n}=\infty.
\]
Set $\gamma_n:=1-t_n$ and $\varepsilon_n:=1-\rho^2(\lambda_{2n-1}, \lambda_{2n})$. Recall that by construction $\rho(\lambda_{2n-1},\lambda_{2n})^{m_{2n-1}}\geq\nu>0$, and therefore by \eqref{eqn:t_n} we have
\[
\gamma_n+\varepsilon_n\underset{n\to\infty}{\sim}\gamma_n,
\] 
and so 
\[
\frac{m_{2n}t_n(1-\rho^2(\lambda_{2n-1},\lambda_{2n}))}{1-\rho^2(\lambda_{2n-1},\lambda_{2n})t_n}=\frac{m_{2n}(1-\gamma_n)\varepsilon_n}{1-(1-\gamma_n)(1-\varepsilon_n)}\underset{n\to\infty}{\sim} m_{2n}\frac{\varepsilon_n}{\gamma_n}.
\]
To conclude observe that, thanks to \eqref{eqn:nu} and \eqref{eqn:tn},  
\[
m_{2n}\frac{\varepsilon_n}{\gamma_n}=m_{2n}\frac{1-\nu^{\frac{2}{m_{2n}}}}{\gamma_n}\underset{n\to\infty}{\sim}\frac{-2\log\nu}{\gamma_n}\to\infty.
\]
\end{proof}
This exhibits a reason why the notion of strongly separated matrices in \eqref{eqn:ssm} involves a uniform lower bound on all the point on $\D$, rather than a condition that has to be checked only at the eigenvalues of the matrices composing the sequence. Even though the latter condition would be more similar to \eqref{eqn:ss}, Theorem \ref{theo:notunif} shows that it doesn't extend to the matrix case when the orders of the eigenvalues are unbounded. 
\begin{rem}
A characterization of uniform strong separation given only in terms of the mutual distance of the points of a the sequence $(\lambda_n)_{n\in\N}$ can be found in \cite[Cor. 5, Lec. X]{nik2}. Indeed, for any sequence of multiplicities $(m_n)_{n\in\N}$, $(\lambda_n)_{n\in\N}$ is uniformly strongly separated if and only if 
\[
\inf_{n\in\N}\prod_{k\ne n}\rho(\lambda_n, \lambda_k)^{m_nm_k}>0,
\] 
provided that $(\lambda_{n\in\N})$ approaches the unit circle non tangentially.
\end{rem}
\section{Interpolating Spectra}
\label{sec:intspec} 
Let $M$ be a square matrix of dimension $m$ with eigenvalues in $\D$.  Pick a closed curve $\gamma$ having winding number $1$ around the spectrum of $M$ and contained in $\D$. Then we can extend the Cauchy integral formula to define, for any $f$ holomorphic in $\D$, the $m\times m$ matrix $f(M)$ by
\[
f(M):=\frac{1}{2\pi i}\int_\gamma f(w)(w\cdot\id_m-M)^{-1}\,dw.
\]
Another way to define $f(M)$ arises from the power series expansion of $f(z)=\sum_{n=0}^\infty a_nz^n$:
\[
f(M):=\sum_{n=0}^\infty a_n M^n.
\]
Under our assumptions on $M$, those two methods give as output the same $f(M)$. For instance, see \cite[Ch. VII]{ds}.
Observe that for any $P$ invertible of size $m$ and for any function $f(z)=\sum_{n=0}^\infty a_nz^n$ holomorphic in $\D$, 
\[
f(PMP^{-1})=\sum_{n=0}^\infty a_n(PMP^{-1})^n=P\bigg(\sum_{n=0}^\infty a_n M^n\bigg)P^{-1}=Pf(M)P^{-1}.
\]
Therefore a sequence of matrices $A=(A_n)_{n\in\N}$ is interpolating if and only if the sequence $A'=(P_nA_nP_n^{-1})_{n\in\N}$ is interpolating, for any sequence $(P_n)_{n\in\N}$ of invertible matrices of the right sizes.\\
For us $A'$ will be the sequence of the Jordan canonical forms of the matrices in $A$. Observe also that for any Jordan block of the form
\[
J=\begin{bmatrix}
\lambda  &1 &0  &0\\
0           &\ddots &\ddots  &\vdots\\
\vdots &\vdots &\ddots &1\\
0 &0 &\hdots  &\lambda
\end{bmatrix}
\]  
of size $m$ we have
\[
f(J)=\begin{bmatrix}
f(\lambda)  &f'(\lambda) &\dots  &\frac{f^{(m-1)}(\lambda)}{(m-1)!}\\
0           &\ddots &\ddots  &\vdots\\
\vdots &\vdots &\ddots &f'(\lambda)\\
0 &0 &\hdots  &f(\lambda)
\end{bmatrix},
\]
for any $\lambda$ in $\D$ and $f$ holomorphic in the unit disk. Thus, to interpolate a sequence of matrices one has to solve a generalized interpolating problem on the eigenvalues of those matrices. Indeed, to any matrix $A_n$ of $A$ we associate the list
\[
(\lambda_{n, j}, m_{n, j})_{j=1}^{k_n}
\]
of its eigenvalues $\lambda_{n, j}$ with associated orders $m_{n, j}$. Namely, any $(\lambda_{n, j}, m_{n, j})$ corresponds to a Jordan block $J_{n, j}$. Thus, without loss of generality
\begin{equation}
\label{eqn:diag}
f(A_n)=\diag(f(J_{n, 1}),\dots, f(J_{n, k_n})),
\end{equation}
where
\begin{equation}
\label{eqn:jblock}
f(J_{n, j})=\begin{bmatrix}
f(\lambda_{n, j})  &f'(\lambda_{n, j}) &\dots  &\frac{f^{(m_{n, j}-1)}(\lambda_{n, j})}{(m_{n, j}-1)!}\\
0           &\ddots &\ddots  &\vdots\\
\vdots &\vdots &\ddots &f'(\lambda_{n, j})\\
0 &0 &\hdots  &f(\lambda_{n, j})
\end{bmatrix}.
\end{equation}
\\
Indeed, for interpolating purposes only, a Jordan block of $A_n$ associated to one of its eigenvalues which doesn't have maximal size is irrelevant. Therefore, \eqref{eqn:diag} and \eqref{eqn:jblock} translate an interpolating problem for matrices to condition \eqref{eqn:genint}, once we set $\Theta_n=B_{A_n}$. Therefore, Theorem \ref{theo:intmat} follows from Theorem \ref{theo:nik}.
\section{P. Beurling Functions}
\label{sec:beu}
Let $(\lambda_n)_{n\in\N}$ be a sequence in $\D$. An elegant way to solve any interpolation problem with bounded nodes $(w_n)_{n\in\N}$ is to construct a sequence of functions $(f_n)_{n\in\N}$ in $\h^\infty$ such that $f_n(\lambda_k)=\delta_{n, k}$, and so that
\begin{equation}
\label{eqn:beu}
\sup_{z\in\D}\sum_{n\in\N}|f_n(z)|<\infty.
\end{equation}
Given such a sequence $(f_n)_{n\in\N}$, one can indeed observe that $f:=\sum_{n\in\N}w_nf_n$ is a bounded analytic function that interpolates $(\lambda_n)_{n\in\N}$ with $(w_n)_{n\in\N}$.\\
The functions $(f_n)_{n\in\N}$ are usually called {\bf P. Beurling functions} associated to $(\lambda_n)_{n\in\N}$. In \cite[Ch. VII, Th. 2.2]{gar}, one can find a proof of the existence of the P. Beurling functions for any interpolating sequence on a uniform algebra of holomorphic function on a compact set.\\
 It turns out that the same proof adapts to the case of a sequence $(A_n)_{n\in\N}$ of interpolating matrices:
\begin{theo}
\label{theo:beu}
Let $(A_n)_{n\in\N}$ be a sequence of interpolating matrices. Then there exists a sequence $(f_n)_{n\in\N}$ in $\h^\infty$ so that \eqref{eqn:beu} holds and $f_n(A_k)=\delta_{n, k}\cdot\mathrm{Id}$.
\end{theo}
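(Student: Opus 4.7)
The strategy is to adapt the proof of \cite[Ch. VII, Th. 2.2]{gar} to the matrix setting via truncation, finite-dimensional duality, and a compactness argument. Combining Theorem \ref{theo:intmat} with the open mapping theorem applied to the bounded linear surjection
\[
T\colon\h^\infty\longrightarrow\ell^\infty\big(\{\h^\infty/B_{A_n}\h^\infty\}_{n\in\N}\big),\qquad T\varphi=(\varphi\bmod B_{A_n}\h^\infty)_n,
\]
gives a uniform interpolation constant $M$: every bounded target has an $\h^\infty$-preimage of norm at most $M$ times the target norm.

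For each $N\in\N$, the truncation $T_N\colon\h^\infty\to\bigoplus_{n=1}^N\h^\infty/B_{A_n}\h^\infty$ has finite-dimensional codomain and admits a bounded linear right inverse $S_N$. Extending any target by zeros to a full sequence and invoking the global interpolation constant yields a uniform bound $\|S_N\|\leq M'$ for some $M'$ depending only on $M$. Let $e_n$ denote the basis vector whose $n$-th coordinate is the class of the constant function $1$: since $B_{A_n}$ has zeros in $\D$, evaluating any representative $1-B_{A_n}g$ at such a zero shows $\|e_n\|=1$ in the $\ell^\infty$-quotient norm. Define $f_n^{(N)}:=S_N(e_n)$, so that $f_n^{(N)}(A_k)=\delta_{n,k}\mathrm{Id}$ for $1\leq k\leq N$ and $\|f_n^{(N)}\|_\infty\leq M'$.

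The decisive step converts the operator-norm bound into pointwise summability via finite-dimensional duality. Fix $z\in\D$ and scalars $\epsilon_1,\dots,\epsilon_N$ of modulus at most $1$: since $\|\sum_n\epsilon_n e_n\|\leq 1$, linearity of $S_N$ gives
\[
\Big|\sum_{n=1}^N\epsilon_n f_n^{(N)}(z)\Big|=\Big|S_N\Big(\textstyle\sum_n\epsilon_n e_n\Big)(z)\Big|\leq M'.
\]
Choosing $\epsilon_n$ so that $\epsilon_n f_n^{(N)}(z)=|f_n^{(N)}(z)|$ yields $\sum_{n=1}^N|f_n^{(N)}(z)|\leq M'$. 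A diagonal extraction via Montel's theorem then produces limit functions $f_n\in\h^\infty$ with $\|f_n\|_\infty\leq M'$ and $f_n(A_k)=\delta_{n,k}\mathrm{Id}$ for every $k$; Fatou's lemma applied to the partial sums transfers the estimate to $\sum_n|f_n(z)|\leq M'$ uniformly in $z\in\D$.

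The main obstacle is ensuring the uniformity of $\|S_N\|$ across all truncations: applying open mapping separately to each $T_N$ would give norms possibly growing with $N$, whereas using Theorem \ref{theo:intmat} globally lets us control all the right inverses simultaneously. Once this uniform bound is secured, the duality between $\ell^\infty_N$ and $\ell^1_N$ and the normal-families passage to the limit complete the argument.
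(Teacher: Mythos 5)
The key gap is in the sentence ``admits a bounded linear right inverse $S_N$ \dots\ invoking the global interpolation constant yields a uniform bound $\|S_N\|\leq M'$.'' The interpolation constant $M$ obtained from the open mapping theorem only guarantees a bounded \emph{selection}: for each target $y$ with $\|y\|\leq 1$ there is \emph{some} preimage $x$ with $\|x\|\leq M$. It does not produce a \emph{linear} right inverse with comparable norm. The finite-dimensionality of the codomain does guarantee that some linear right inverse of $T_N$ exists, but a linear $S_N$ with $T_NS_N=\mathrm{id}$ must satisfy $\|S_N(y)\|\leq M'\|y\|$ for \emph{every} $y$, and the open-mapping bound says nothing about the particular preimage $S_N(y)$. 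General finite-dimensional arguments (e.g.\ via an Auerbach basis of the codomain) give only $\|S_N\|\leq M\cdot\dim\bigl(\bigoplus_{n=1}^N\h^\infty/B_{A_n}\h^\infty\bigr)$, a bound that grows with $N$ and therefore destroys the Fatou/Montel passage to the limit. Your sign-choosing step ($\epsilon_nf_n^{(N)}(z)=|f_n^{(N)}(z)|$) crucially uses linearity of $S_N$, so it cannot be salvaged with a nonlinear selection. Note also that a uniformly bounded linear right inverse of the full restriction map is essentially \emph{equivalent} to the existence of P.\ Beurling functions, so you are assuming something very close to the conclusion.

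The paper avoids this entirely via the Vinogradov/Garnett roots-of-unity trick, which never needs a linear right inverse. Fix $n$, let $\omega$ be a primitive $n$-th root of unity, and use the (possibly nonlinear) bounded selection $n$ times on the constant targets $\phi_k=\omega^{jk}$ to get $g_1,\dots,g_n$ with $\|g_j\|_\infty\leq M+\gamma$ and $g_j(A_k)=\omega^{jk}\mathrm{Id}$. Then set $f_j:=\bigl(\frac{1}{n}\sum_{k=1}^n\omega^{-jk}g_k\bigr)^2$. The squaring is essential: the orthogonality $\sum_{j=1}^n\omega^{(l-k)j}=n\delta_{kl}$ converts the telescoping double sum into the $L^2$ quantity $\sum_{j=1}^n|f_j(z)|=\frac{1}{n}\sum_{k=1}^n|g_k(z)|^2\leq (M+\gamma)^2$, uniformly in $n$ and $z$; the normal-families diagonal argument then concludes. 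If you want to salvage your route, you would need an independent construction of a uniformly bounded linear right inverse of each $T_N$, and I do not see how to obtain one without essentially reproducing the squaring trick.
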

As for the scalar case, Theorem \ref{theo:beu} gives an algorithm to interpolate any bounded target sequence $(\phi_n)_{n\in\N}$ in $\h^\infty$: it suffices to set
\[
f:=\sum_{n\in\N}f_n\phi_n\in\h^\infty,
\]
and therefore  $f(A_n)=\phi_n(A_n)$, for any $n$.\\

Before proving Theorem \ref{theo:beu}, we define the \emph{constant of interpolation } for a sequence $A=(A_n)_{n\in\N}$ of interpolating matrices. Consider the Banach space
\[
\mathcal{L}^\infty:=\left\{(f_n)_{n\in\N}\subset\h^\infty\,\,|\,\, \sup_{n\in\N}||f_n||_\infty<\infty\right\},
\]
and define the equivalence relation $\sim_A$ on $\mathcal{L}^\infty$ so that two sequences are equivalent if and only if they agree on $A$, that is,
\[
(f_n)_{n\in\N}\sim_A(g_n)_{n\in\N}\quad\iff\quad f_n(A_n)=g_n(A_n),\qquad n\in\N.
\]
Set $\mathcal{L}^\infty_A:=\mathcal{L}^\infty/\sim_A$, and observe that the linear contraction
\[
E_A: h\in\h^\infty\mapsto[(h)_{n\in\N}]_{\sim_A}\in\mathcal{L}^\infty_A
\]
is surjective, since $A$ is interpolating. The open map Theorem implies that there exists a constant $M$ so that, if $\sup_{n\in\N}||\phi_n||_\infty\leq1$, and $f(A_n)=\phi_n(A_n)$ for any $n$, one has 
\[
||f||_\infty\leq M.
\]
The least $M$ that works is denoted by interpolation constant of the sequence $A$.\\

Theorem \ref{theo:beu} now follows from a normal family argument and the following Proposition:
\begin{prop}
Let $A$ be an interpolating sequence of matrices, and $M$ be its interpolation constant. Then, for any $n$ in $\N$ and for any $\varepsilon>0$, there exist $f_1$, $f_2$, \dots, $f_n$ in $\h^\infty$ so that 
\[
f_i(A_j)=\delta_{i, j}\cdot\mathrm{Id}\qquad i,j=1, \dots, n
\]
and 
\[
\sup_{z\in\D}\sum_{j=1}^n|f_j(z)|\leq M^2+\varepsilon.
\]
\end{prop}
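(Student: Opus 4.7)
The plan is to apply the interpolation hypothesis twice, with the two applications producing the quadratic dependence on $M$.

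First I extract a family of candidate interpolants. Fix $\varepsilon_0 > 0$ small, to be adjusted below. For each $j \in \{1, \dots, n\}$, the interpolation hypothesis applied to the constant targets $\phi_k^{(j)} := \delta_{jk} \in \h^\infty$ (which form a sequence bounded by $1$) produces $g_j \in \h^\infty$ with $g_j(A_k) = \delta_{jk} \cdot \mathrm{Id}$ for every $k \in \N$ and $\|g_j\|_\infty \leq M + \varepsilon_0$. These $g_j$ already satisfy the delta-interpolation requirement, but only yield the naive bound $\sum_j |g_j(z)| \leq n(M + \varepsilon_0)$, which depends on $n$.

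Second I control the sum via duality plus a second application of the hypothesis. Using the identity $\sum_j |f_j(z)| = \sup_{\|\alpha\|_\infty \leq 1}\bigl|\sum_j \alpha_j f_j(z)\bigr|$, it suffices to construct the $f_j$ so that every linear combination $\sum_j \alpha_j f_j$ has $\h^\infty$-norm at most $M^2 + \varepsilon$. For each $\alpha \in \C^n$ with $\|\alpha\|_\infty \leq 1$, the constant sequence $\phi_k = \alpha_k$ for $k \leq n$ and $\phi_k = 0$ for $k > n$ is bounded by $1$ in $\h^\infty$; applying the interpolation hypothesis yields $F_\alpha \in \h^\infty$ with $F_\alpha(A_k) = \alpha_k \cdot \mathrm{Id}$ for $k \leq n$, $F_\alpha(A_k) = 0$ for $k > n$, and $\|F_\alpha\|_\infty \leq M + \varepsilon_0$. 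One factor of $M$ thus comes from the first interpolation (giving the $g_j$'s of norm $\leq M$), and the second factor from this re-interpolation of the dual variable $\alpha$, itself with norm $\leq M$. Concretely, one builds a bounded linear section $T: \ell^\infty_n \to \h^\infty$ with $(Tw)(A_k) = w_k \cdot \mathrm{Id}$ and $\|T\|_{\ell^\infty_n \to \h^\infty} \leq M^2 + \varepsilon$, then sets $f_j := T(e_j)$. Since $\sup_z \sum_j |f_j(z)| = \|T\|$, the proposition follows.

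The main obstacle is precisely this linearization. The interpolation hypothesis gives, for each $\alpha$, an interpolant $F_\alpha$ of norm $\leq M$, but the dependence $\alpha \mapsto F_\alpha$ is not canonical: $F_\alpha$ is only determined modulo the kernel of the evaluation map $f \mapsto (f(A_k))_{k \leq n}$. Turning the pointwise selection $\alpha \mapsto F_\alpha$ into a uniformly bounded linear map $T$ requires an additional Banach-space argument, and this is precisely where the extra factor $M$ enters. The argument is the matrix version of Garnett's scalar proof of the existence of P. Beurling functions (Ch. VII, Thm. 2.2), transported via the Riesz-Dunford functional calculus; the reduction to scalar targets $\alpha_k$ is valid because any constant $c \in \h^\infty$ satisfies $c(A_k) = c \cdot \mathrm{Id}$.
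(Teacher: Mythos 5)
Your outline correctly identifies what must be built: functions $f_j$ with the delta property whose pointwise sum of moduli is uniformly bounded by $M^2+\varepsilon$, equivalently a linear right inverse $T\colon\ell^\infty_n\to\h^\infty$ of the evaluation map with $\|T\|\le M^2+\varepsilon$. But having reformulated the goal, you stop precisely where the mathematical content of the proposition begins. The phrase ``requires an additional Banach-space argument, and this is precisely where the extra factor $M$ enters'' is not a proof; no generic Banach-space fact converts the nonlinear open-mapping estimate (``each $\alpha$ with $\|\alpha\|_\infty\le 1$ has a preimage of norm $\le M$'') into a \emph{linear} section of norm $M^2$. Your $g_j$'s from interpolating the constants $\delta_{jk}$ give only the bound $n(M+\varepsilon_0)$, your $F_\alpha$'s do not depend linearly on $\alpha$, and you never explain how to pass from either to a genuine $T$. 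Citing Garnett's Theorem~2.2 is citing the result, not proving it.

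Moreover your accounting of the two factors of $M$ — ``one from the first interpolation, one from re-interpolation of the dual variable $\alpha$'' — is not how the argument actually works, and I don't see how to make it work as you describe. The paper (following Garnett) performs a \emph{single} interpolation step, but with roots-of-unity data rather than deltas: for a primitive $n$-th root of unity $\omega$, solve $g_j(A_k)=\omega^{jk}\,\mathrm{Id}$ with $\|g_j\|_\infty\le M+\gamma$. One then sets
\[
f_j:=\Bigl(\tfrac{1}{n}\sum_{k=1}^n\omega^{-jk}g_k\Bigr)^{2},
\]
so the delta property follows from discrete Fourier inversion, and expanding the square and summing over $j$, orthogonality of the characters (Parseval) collapses the triple sum to $\tfrac{1}{n}\sum_k|g_k(z)|^2\le(M+\gamma)^2$. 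Both factors of $M$ arise from the squaring of the $g_k$'s, and the uniform-in-$n$ bound comes from the exact cancellation in the DFT — not from a second interpolation. Your correct observation that constants $c\in\h^\infty$ satisfy $c(A_k)=c\cdot\mathrm{Id}$ is exactly what makes this scalar construction transfer verbatim to the matrix setting, but you need to actually carry it out: the roots-of-unity choice, the squaring, and the Parseval computation are the proof, and they are absent from your proposal.
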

The proof is a straightforward adaptation of the elegant argument in \cite[h. VII, Th. 2.2]{gar}:
\begin{proof}
Fix a positive integer $n$, a positive $\varepsilon$, and let $\omega$ be a primitive $n$-th root of unity in $\C$. For any $j=1, \dots, n$, solve the the interpolating problem with constant data
\[
\phi_k=\begin{cases}
\omega^{jk}\quad&\text{if}\, 1\leq k\leq n\\
0\quad&\text{if}\, k>n
\end{cases},
\]
and find then, for any positive $\gamma$, a function $g_j$ so that $||g_j||_\infty\leq M+\gamma$ and 
\[
g_j(A_k)=\omega^{jk}~\mathrm{Id},\qquad j,k=1, \dots, n.
\]
Define
\[
f_j(z):=\left(\frac{1}{n}\sum_{k=1}^n\omega^{-jk}g_k(z)\right)^2,
\]
and since $\omega$ is a primitive $n$-th root of unity one can check that $f_j(A_k)=\delta_{j, k}\cdot\mathrm{Id}$. Moreover, for any $z$ in $\D$,
\[
\begin{split}
\sum_{j=1}^n|f_j(z)|&=\frac{1}{n^2}\sum_{j=1}^n\left(\sum_{l,k=1}^n \omega^{(l-k)j}g_k(z)\overline{g_l(z)}\right)\\
&=\frac{1}{n^2}\sum_{k, l=1}^ng_k(z)\overline{g_l(z)}\sum_{j=1}^n\omega^{(l-k)j}\\
&=\frac{1}{n^2}\sum_{k=1}^nn|g_k(z)|^2\leq(M+\gamma)^2\leq M^2+\varepsilon,
\end{split}
\]
if $\gamma$ is chosen small enough.
\end{proof}

In \cite{beu}, Vinogradov, Gorin and Khruschen deduced the existence of the P. Beurling functions for a sequence $\Lambda=(\lambda_n)_{n\in\N}$ in $\D$ by only assuming that $\Lambda$ is strongly separated. It would be interesting for us to find a similar argument for a matrix interpolation problem.
\section{Separation via Model Spaces}
\label{sec:kernels}
In this section, we will see how a separation condition like \eqref{eqn:ssm} can be seen as an almost orthogonality property for kernel functions in the Hardy space $\h^2=\h^2(\D)$. Let $(z_n)_{n\in\N}$ be a sequence in the unit disc, and $(m_n)_{n\in\N}$ be a sequence of positive integers. As we saw in Section \ref{sec:sep}, to study separation properties of the sequence $(z_n)_{n\in\N}$ we need to form the Blaschke product
\[
B:=\prod_{n\in\N}b_{z_n}^{m_n}.
\]
For any $w$ in $\D$, denote by $k_w$ the reproducing kernel of $w$ in $\h^2$, and let $k^{(j)}_w=\partial^jk_w/\partial\overline{w}^j$ be the reproducing kernel for the $j$-th derivatives at $w$, that is,
\[
\braket{f, k^{(j)}_w}=f^{(j)}(w)\qquad f\in\h^2.
\]
Observe that a kernel function $k_w$ belongs to 
\[
\h^2\ominus B\h^2=\spa\{k_{z_n}, k^{(1)}_{z_n},\dots,k^{(m_n-1)}_{z_n} | n\in\N\}
\]
 if and only if $B(w)=0$. Indeed, the only zeros of $B$ are in the sequence $(z_n)_{n\in\N}$, and therefore if $B(w)=0$ then $z$ must actually be a point in the sequence. Conversely, if $k_w$ belongs to the subspace of $\h^2$ spanned by $\{k_{z_n}, k^{(1)}_{z_n},\dots,k^{(m_n-1)}_{z_n} | n\in\N\}$, then we can write
\[
k_w=\sum_{i=1}^l\sum_{j=0}^{m_{n_i}-1}a_{i, j}k^{(j)}_{z_{n_i}},
\]
and
\[
B(w)=\braket{B, k_z}_{\h^2}=\sum_{i=1}^l\sum_{j=0}^{m_{n_i}-1}a_{i, j}\braket{B, k^{(j)}_{z_{n_i}}}_{\h^2}=0,
\]
since $B$ has a zero of multiplicity $m_n-1$ at each $z_n$.\\
More generally, 
\begin{theo}
\label{theo:dkernels}
For any $d\geq1$ and for any $\Theta$  inner function in $\h^2(\D^d)$
\begin{equation}
\label{eqn:dist:span}
\dist\bigg(\frac{k_w}{||k_w||}\,\,,\,\,\h^2(\D^d)\ominus\Theta\h^2(\D^d)\bigg)=|\Theta(w)|.
\end{equation}
\end{theo}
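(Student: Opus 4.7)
The plan is to reduce the distance computation to a projection computation, and then to exploit the fact that multiplication by an inner function is an isometry on $\h^2(\D^d)$. Specifically, for any closed subspace $K$ of a Hilbert space $\mathcal{H}$ and any unit vector $v \in \mathcal{H}$, one has $\dist(v, K) = \|P_{K^\perp} v\|$. Applying this with $\mathcal{H} = \h^2(\D^d)$, $K = \h^2(\D^d)\ominus\Theta\h^2(\D^d)$, and $v = k_w/\|k_w\|$, we get
\[
\dist\bigg(\frac{k_w}{\|k_w\|},\,\h^2(\D^d)\ominus\Theta\h^2(\D^d)\bigg) = \frac{\|P_{\Theta\h^2(\D^d)}\,k_w\|}{\|k_w\|}.
\]
So the task reduces to evaluating the orthogonal projection of the reproducing kernel $k_w$ onto $\Theta\h^2(\D^d)$.

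To compute this projection, I would write $P_{\Theta\h^2(\D^d)}k_w = \Theta\,\phi$ for some (yet unknown) $\phi \in \h^2(\D^d)$, and then determine $\phi$ by testing against elements $\Theta g$ of $\Theta\h^2(\D^d)$. Since $\Theta$ is inner, $|\Theta|=1$ almost everywhere on the distinguished boundary (the torus), so the multiplication operator $M_\Theta$ is an isometry on $\h^2(\D^d)$. Thus for every $g \in \h^2(\D^d)$,
\[
\braket{\phi, g} = \braket{\Theta\phi, \Theta g} = \braket{k_w, \Theta g} = \overline{\Theta(w)g(w)} = \braket{\overline{\Theta(w)}\,k_w,\, g}.
\]
Since this holds for all $g$ in $\h^2(\D^d)$, we conclude that $\phi = \overline{\Theta(w)}\,k_w$, and therefore
\[
P_{\Theta\h^2(\D^d)}\,k_w = \overline{\Theta(w)}\,\Theta\,k_w.
\]

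Finally, I would use again the isometric property of $M_\Theta$ to compute the norm:
\[
\|P_{\Theta\h^2(\D^d)}\,k_w\| = |\Theta(w)|\cdot\|\Theta\,k_w\| = |\Theta(w)|\cdot\|k_w\|,
\]
which combined with the first display proves \eqref{eqn:dist:span}. The only non-routine input is the fact that $M_\Theta$ is an isometry on $\h^2(\D^d)$, which is the defining property of inner functions in the polydisc; no obstacle should arise in passing from the disc to the polydisc, since the argument uses only the reproducing property of $k_w$ and the isometric nature of multiplication by $\Theta$, both of which hold in every dimension.
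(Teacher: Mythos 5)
Your proof is correct, and it follows a genuinely different route from the paper's. The paper argues variationally: it invokes two auxiliary lemmas, one expressing $1/\dist(x,K)$ as an infimum of norms over $\{y\in K^\perp : \braket{y,x}=1\}$ (cited from \cite[Prop.~12.6]{ross}), and one solving the extremal problem $\inf\{\|g\|_{\mathcal H} : g(z)=a\}$ in a reproducing kernel Hilbert space. Substituting $K^\perp = \Theta\h^2(\D^d)$ and using the isometry of $M_\Theta$ to pass from $h=\Theta g$ to $g$, the paper reduces the distance to the second lemma's extremal quantity. You instead bypass both lemmas by directly identifying the projection: writing $P_{\Theta\h^2(\D^d)}k_w=\Theta\phi$ and testing against $\Theta g$, the self-adjointness of the projection together with the isometry of $M_\Theta$ forces $\phi=\overline{\Theta(w)}k_w$, giving the explicit formula $P_{\Theta\h^2(\D^d)}k_w=\overline{\Theta(w)}\,\Theta\,k_w$ from which the norm is read off. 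Both proofs rest on exactly the same two facts (the reproducing property and the isometry of $M_\Theta$ for inner $\Theta$), so neither is more general; but your version is more self-contained and produces the projection formula as a byproduct, whereas the paper's version leans on lemmas it reuses elsewhere in the text. One small point worth making explicit in your write-up: the equality $\braket{\Theta\phi,\Theta g}=\braket{k_w,\Theta g}$ holds because $\Theta\phi=P_{\Theta\h^2(\D^d)}k_w$ and the projection is self-adjoint with $\Theta g$ already in its range.
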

In particular, if $d=1$ and $\Theta=B$, \eqref{eqn:dist:span} says that $|B(w)|$ is the distance between the normalized kernel function at $w$ and  $\spa\{k_{z_n}, k^{(1)}_{z_n},\dots,k^{(m_n-1)}_{z_n} | n\in\N\}$.\\

 Now, let $(A_n)_{n\in\N}$ be a sequence of matrices with spectra in the unit disk and let $(H_n)_{n\in\N}$ be its associated sequence of model spaces in $\h^2$, as in \eqref{eqn:model}.  Theorem \ref{theo:intmat} and Theorem \ref{theo:dkernels} imply immediately that the sequence $(A_n)_{n\in\N}$ is interpolating if and only if  $(H_n)_{n\in\N}$ is, in a certain sense, well separated:
\begin{theo}
A sequence of matrices $(A_n)_{n\in\N}$ is interpolating if and only if 
\begin{equation}
\label{eqn:sepker}
\inf_{z\in\D}\sup_{n\in\N}\dist\bigg(\frac{k_z}{||k_z||}\,\,,\,\, \underset{k\ne n}{\spa}\{H_k\}\bigg)>0.
\end{equation}
\end{theo}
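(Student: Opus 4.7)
The plan is to establish the pointwise identity
\[
\dist\bigg(\frac{k_z}{||k_z||}\,,\,\overline{\spa}\{H_k : k \ne n\}\bigg) = \prod_{k \ne n} |B_{A_k}(z)|, \qquad n \in \N,\ z \in \D.
\]
Since the distance from a point to a subspace and to its closure agree, taking $\sup_{n}$ and then $\inf_{z}$ of both sides turns the separation condition \eqref{eqn:ssm} of Definition \ref{defi:sep} into \eqref{eqn:sepker}. Theorem \ref{theo:intmat} then delivers the equivalence with $(A_n)_{n\in\N}$ being interpolating.

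To prove the pointwise identity, I would argue by finite truncation. For each $N \in \N$ set $\Theta_n^N := \prod_{k \ne n,\, k \le N} B_{A_k}$, a finite product of Blaschke factors and hence an inner function. When the $B_{A_k}$'s have pairwise disjoint zero sets, iterating the standard orthogonal decomposition
\[
\h^2 \ominus (B_1 B_2)\h^2 = (\h^2 \ominus B_1 \h^2) \oplus B_1(\h^2 \ominus B_2 \h^2)
\]
for coprime inner factors gives $\h^2 \ominus \Theta_n^N \h^2 = \spa\{H_k : k \ne n,\, k \le N\}$; equivalently, both sides are spanned by the same finite collection of reproducing-kernel derivatives, as in the first paragraph of Section \ref{sec:kernels}. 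Theorem \ref{theo:dkernels} applied to $\Theta_n^N$ then yields
\[
\dist\bigg(\frac{k_z}{||k_z||}\,,\,\h^2 \ominus \Theta_n^N\h^2\bigg) = |\Theta_n^N(z)|.
\]
Letting $N \to \infty$, the left side decreases to $\dist(k_z/||k_z||\,,\, \overline{\spa}\{H_k : k \ne n\})$ (the truncated spans are nested and their union is dense in the closed span), while the right side decreases to $\prod_{k \ne n} |B_{A_k}(z)|$ since each factor lies in $[0,1]$.

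The main obstacle is the disjoint-spectra assumption needed for the span identity. I would handle this up front by observing that both \eqref{eqn:ssm} and \eqref{eqn:sepker} already force the $A_n$'s to have pairwise disjoint spectra. Indeed, if distinct $A_n, A_k$ share an eigenvalue $\lambda$, then $k_\lambda \in H_n \cap H_k$, so for every index $n'$ at least one of $H_n, H_k$ is contained in $\spa\{H_{k'} : k' \ne n'\}$; thus $k_\lambda$ lies in that span and the supremum in \eqref{eqn:sepker} vanishes at $z = \lambda$. Likewise $B_{A_n}(\lambda) = B_{A_k}(\lambda) = 0$ makes the supremum in \eqref{eqn:ssm} vanish at $z = \lambda$. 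In either direction of the proof one may therefore reduce to the disjoint case, where the truncation argument goes through and the identity follows.
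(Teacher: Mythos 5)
Your proof is correct and follows the same route as the paper's own short argument: reduce to strong separation via Theorem \ref{theo:intmat}, identify $\underset{k\ne n}{\overline{\spa}}\{H_k\}$ with the model space $\h^2\ominus\left(\prod_{k\ne n}B_{A_k}\right)\h^2$, and apply the distance formula of Theorem \ref{theo:dkernels}. Where the paper simply asserts this span identity, you supply a proof (finite truncation together with the orthogonal decomposition of model spaces for coprime inner factors) and correctly observe that the disjoint-spectra hypothesis it requires is harmless, since both sides of the equivalence already fail at $z=\lambda$ whenever two of the $A_n$ share an eigenvalue $\lambda$.
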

\begin{proof}
Thanks to Theorem \ref{theo:intmat}, $(A_n)_{n\in\N}$ is interpolating if and only if it is strongly separated, that is if and only if
\[
\inf_{z\in\D}\sup_{n\in\N}\prod_{k\ne n}|B_{A_k}(z)|>0.
\]
Since, for any $n$, $\underset{k\ne n}{\overline{\spa}}\{H_k\}=\h^2\ominus \left(\prod_{k\ne n}B_{A_k}\right)\h^2$, equation \eqref{eqn:dist:span} concludes the proof.
\end{proof}
The proof of Theorem \ref{theo:dkernels} follows from two Lemmas. The first one states a geometric property of any Hilbert space:
\begin{lemma}
\label{lemma:dist}
Let $K$ be a closed subspace of a Hilbert space $\mathcal{H}$, and $x$ be a point of $\mathcal{H}\setminus K$. Then
\[
\frac{1}{\dist(x, K)}=\inf\{||y|| \,\,|\,\, y\in K^\perp\,\, ,\,\,\braket{y, x}=1\}.
\]
\end{lemma}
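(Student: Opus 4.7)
The plan is to prove the identity by double inequality, passing through the orthogonal projection onto $K$. Let $P$ denote the orthogonal projection of $\mathcal{H}$ onto $K$, and set $x^\perp := x - P(x) \in K^\perp$. Since $x \notin K$, we have $x^\perp \neq 0$, and the standard characterization of orthogonal projections gives $\dist(x, K) = \|x^\perp\|$. Write $d := \|x^\perp\| > 0$; the goal is then to show that the infimum on the right-hand side equals $1/d$.

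For the lower bound, I would take an arbitrary competitor $y \in K^\perp$ with $\braket{y, x} = 1$ and split $x = P(x) + x^\perp$. Since $y \perp P(x)$, the inner product collapses to $\braket{y, x^\perp} = 1$, and the Cauchy–Schwarz inequality yields $1 \leq \|y\| \cdot \|x^\perp\| = \|y\| d$, so $\|y\| \geq 1/d$. Taking the infimum over all such $y$ gives $\inf\{\|y\|\} \geq 1/d$, which is equivalent to $\dist(x, K) \geq 1/\inf\{\|y\|\}$.

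For the matching upper bound, I would produce an explicit candidate achieving $\|y\| = 1/d$, namely the rescaled projection
\[
y_0 := \frac{x^\perp}{\|x^\perp\|^2}.
\]
By construction $y_0 \in K^\perp$, and splitting $x$ as before gives $\braket{y_0, x} = \braket{y_0, x^\perp} = \|x^\perp\|^2/\|x^\perp\|^2 = 1$, while $\|y_0\| = 1/\|x^\perp\| = 1/d$. This shows the infimum is in fact attained and equals $1/d$, completing the identity.

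There is no real obstacle: the entire argument is a routine application of the orthogonal decomposition $\mathcal{H} = K \oplus K^\perp$ and Cauchy–Schwarz, with the equality case of Cauchy–Schwarz witnessed concretely by the rescaled $x^\perp$. The only minor point to double-check is that the infimum set is nonempty (so that the right-hand side is well defined), which is guaranteed by the existence of the explicit $y_0$ above.
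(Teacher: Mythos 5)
Your proof is correct and complete. Note that the paper does not actually prove this lemma: it simply cites \cite[Prop.\ 12.6]{ross}, so there is no in-paper argument to compare against. Your argument via the orthogonal decomposition $\mathcal{H}=K\oplus K^\perp$, Cauchy--Schwarz for the lower bound, and the explicit extremizer $y_0=x^\perp/\|x^\perp\|^2$ for the upper bound is exactly the standard proof one finds in the cited reference, and the normalization $\braket{y_0,x}=\braket{y_0,x^\perp}=1$ is handled correctly. One small remark: in the lower-bound step you should write $1=|\braket{y,x^\perp}|\le\|y\|\,\|x^\perp\|$ (taking absolute values before applying Cauchy--Schwarz), since $\braket{y,x^\perp}$ is a priori complex, though of course it equals $1$ here by hypothesis; this is cosmetic and does not affect the validity of the argument.
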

For a proof, see \cite[Prop. 12.6]{ross}. The second Lemma is the solution of a very natural extremal problem for reproducing kernel Hilbert spaces. Given a reproducing kernel Hilbert space $\mathcal{H}$ on a set $X$, let $\braket{\cdot,\cdot}_{\mathcal{H}}$, $||\cdot||_{\mathcal{H}}$ and $k^\mathcal{H}_z$ denote the inner product, the norm and a reproducing kernel at $z$ in $X$, respectively.
\begin{lemma}
\label{lemma:h2}
Let $a$ be a point of $\C$, and $z$ in $X$. Then
\[
\inf\{||g||_\mathcal{H}\,\,|\,\,g\in\mathcal{H}, \,\,g(z)=a \}=\frac{|a|}{||k^\mathcal{H}_z||_{\mathcal{H}}}.
\]
\end{lemma}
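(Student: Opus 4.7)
The plan is to exploit the reproducing kernel property together with Cauchy--Schwarz to obtain the lower bound, and then to exhibit an explicit extremal function achieving that bound.

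First I would rewrite the constraint $g(z)=a$ as an inner product condition. By the reproducing property of $k_z^\mathcal{H}$, we have $g(z)=\braket{g,k_z^\mathcal{H}}_\mathcal{H}$ for every $g\in\mathcal{H}$. Hence the constraint becomes $\braket{g,k_z^\mathcal{H}}_\mathcal{H}=a$, and Cauchy--Schwarz immediately gives
\[
|a|=|\braket{g,k_z^\mathcal{H}}_\mathcal{H}|\leq \|g\|_\mathcal{H}\,\|k_z^\mathcal{H}\|_\mathcal{H},
\]
which yields $\|g\|_\mathcal{H}\geq |a|/\|k_z^\mathcal{H}\|_\mathcal{H}$ for every admissible $g$, and thus one direction of the equality.

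Next I would produce an extremal candidate. The natural choice is
\[
g_0(w):=\frac{a}{\|k_z^\mathcal{H}\|_\mathcal{H}^2}\,k_z^\mathcal{H}(w),
\]
since equality in Cauchy--Schwarz requires $g$ to be a scalar multiple of $k_z^\mathcal{H}$. Using the identity $k_z^\mathcal{H}(z)=\braket{k_z^\mathcal{H},k_z^\mathcal{H}}_\mathcal{H}=\|k_z^\mathcal{H}\|_\mathcal{H}^2$, one checks that $g_0(z)=a$, and a direct norm computation gives $\|g_0\|_\mathcal{H}=|a|/\|k_z^\mathcal{H}\|_\mathcal{H}$, so the infimum is attained.

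There is essentially no obstacle here: the argument is two lines once one invokes the reproducing property, and it is a textbook Cauchy--Schwarz extremal problem. The only minor point to watch is the degenerate case $k_z^\mathcal{H}=0$ (equivalently, $\mathcal{H}$ contains no function with $g(z)\neq 0$), in which case the infimum over an empty set is vacuously $+\infty$, consistent with the right-hand side; this is best flagged in a brief remark rather than belabored.
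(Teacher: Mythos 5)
Your proof is correct and follows the same route as the paper's: the Cauchy--Schwarz lower bound via the reproducing property, followed by verifying that the scalar multiple $\frac{a}{\|k_z^{\mathcal{H}}\|^2_{\mathcal{H}}}k_z^{\mathcal{H}}$ attains it. The brief remark about the degenerate case $k_z^{\mathcal{H}}=0$ is a sensible addition but does not change the substance.
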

\begin{proof}
Let $g$ be any function in $\mathcal{H}$ such that $g(z)=a$. Then by Cauchy-Schwartz's inequality
\[
|a|=|\braket{g, k^\mathcal{H}_z}_\mathcal{H}|\leq||g||_\mathcal{H}\cdot||k^\mathcal{H}_z||_\mathcal{H},
\]
and therefore
\[
\inf\{||g||_\mathcal{H}\,\,|\,\,g\in\mathcal{H},\, g(z)=a \}\geq\frac{|a|}{||k^\mathcal{H}_z||_\mathcal{H}},
\]
since $g$ was arbitrary chosen. To prove the other inequality, it suffices to find one function $g$ in $\mathcal{H}$ such that $g(z)=a$ and $||g||_\mathcal{H}=|a|/||k^\mathcal{H}_z||_\mathcal{H}$. The function
\[
g=\frac{a}{||k^\mathcal{H}_z||_\mathcal{H}^2}k^\mathcal{H}_z
\]
has the requested properties.
\end{proof}
We can prove now Theorem $\ref{theo:dkernels}$:
\begin{proof}[Proof of Theorem \ref{theo:dkernels}]
By Lemma \ref{lemma:dist},
\begin{equation}
\label{eqn:sep:kernels}
\begin{split}
\dist\bigg(\frac{k_z}{||k_z||}\,\,,\,\,H\bigg)=&\frac{1}{||k_z||\inf\{||h||\,\,|\,\, h(z)=1\,,\,h=\Theta g\,,\,g\in\h^2\}}\\
=&\frac{1}{||k_z||\inf\{||g||\,\,|\,\, g(z)=1/\Theta(z)\}}.
\end{split}
\end{equation}
 By applying Lemma \ref{lemma:h2} to the rightmost side of \eqref{eqn:sep:kernels}, we get \eqref{eqn:dist:span} and we conclude the proof.
\end{proof}
Equation \eqref{eqn:sepker} says, intuitively, the following: the angle between any model space and the span of all the others  is large enough so that, fixed a kernel function $k_w$ in $\h^2$, the span of all the model spaces except the one which is closest to $k_w$ is at a uniform positive distance from $k_w$. To give a formal justification to our intuition, we are going now to quantify the relation between condition \eqref{eqn:sepker} and the actual sines between the model spaces involved.\\
A fundamental tool for what follows is Carleson's corona Theorem:
\begin{theo}[Carleson's corona Theorem]
\label{theo:corona}
Let $u_1$ and $u_2$ be two inner functions in $\h^2$ so that
\begin{equation}
\label{eqn:corona}
\inf_{z\in\D}\max\{|u_1(z)| , |u_2(z)|\}\geq\delta>0. 
\end{equation}
Then there exists a constant $C_\delta>0$ (depending only on $\delta$) and two functions $g_1$ and $g_2$ in $\h^\infty$ of norm less than $C_\delta$ such that
\[
u_1g_1+u_2g_2=1.
\]
\end{theo}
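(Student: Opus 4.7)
The plan is to follow T.~Wolff's $\bar{\de}$-approach, which is the cleanest route to Carleson's corona theorem. First, construct explicit smooth (though non-holomorphic) solutions by setting
\[
\phi_j := \frac{\overline{u_j}}{|u_1|^2+|u_2|^2},\qquad j=1,2.
\]
Hypothesis \eqref{eqn:corona} guarantees that the denominator is bounded below by $\delta^2$, so the $\phi_j$ are smooth and bounded on $\D$, and by construction $u_1\phi_1+u_2\phi_2\equiv 1$ pointwise.

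Next, correct these to holomorphic functions via an antisymmetric perturbation. Seek a bounded function $b$ on $\D$ and set $g_1:=\phi_1+u_2 b$ and $g_2:=\phi_2-u_1 b$. Then $u_1 g_1+u_2 g_2=1$ regardless of the choice of $b$, and since the $u_j$ are holomorphic, the two requirements $\bar{\de} g_1=\bar{\de} g_2=0$ collapse to the single $\bar{\de}$-problem
\[
\bar{\de}\, b \,=\, \frac{\bar{\de}\,\phi_1}{u_2};
\]
the two possible right-hand sides match because differentiating $u_1\phi_1+u_2\phi_2=1$ forces $u_1\bar{\de}\phi_1+u_2\bar{\de}\phi_2=0$. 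If this equation can be solved with $\|b\|_\infty \leq C_\delta$, then $g_1,g_2\in\h^\infty$ with norm controlled only by $\delta$, which is exactly what is required.

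The hard part is therefore solving $\bar{\de}\, b = f$ in $L^\infty(\D)$ for the specific datum $f=(\bar{\de}\,\phi_1)/u_2$. A direct computation from the formula for $\phi_1$ shows that $|f(z)|$ is controlled, up to a constant depending only on $\delta$, by $|u_1'(z)|+|u_2'(z)|$, and the Littlewood--Paley identity then ensures that $|f(z)|^2(1-|z|^2)\,\di A(z)$ is a Carleson measure on $\D$ with norm controlled in terms of $\delta$. The deep analytic input is Wolff's lemma: a $\bar{\de}$-datum satisfying a suitable Carleson measure estimate admits an $L^\infty$ solution, with norm controlled by the Carleson norm. Wolff's lemma itself is proved by duality against $\h^1$, the Garsia characterization of $\mathrm{BMO}$, and an application of Green's theorem; this step is the main obstacle and supplies the only genuinely non-elementary ingredient of the whole argument.

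Given the depth of this machinery and that Theorem \ref{theo:corona} is invoked below only as a black box, a sensible alternative for a paper of the present scope is simply to cite the self-contained treatment in \cite[Ch.~VIII]{gar}.
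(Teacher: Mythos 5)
The paper does not give a proof of this theorem: after stating it, the author simply writes ``For instance, see \cite[Ch.~VIII, Th.~2.1]{gar}'' and uses it as a black box. Your sketch is a correct outline of Wolff's $\bar{\de}$-proof, the one presented in that reference, and you identify all the key ingredients (the smooth non-holomorphic solutions $\phi_j$, the antisymmetric correction $g_1=\phi_1+u_2 b$, $g_2=\phi_2-u_1 b$, the reduction to a single $\bar{\de}$-equation, the bound $|f|\lesssim_\delta |u_1'|+|u_2'|$, the Littlewood--Paley/Carleson-measure estimate, and Wolff's $L^\infty$-solvability lemma via duality with $\h^1$ and Green's theorem). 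One small slip: from $\bar{\de}g_1=\bar{\de}\phi_1+u_2\bar{\de}b=0$ the equation should read $\bar{\de}b=-\bar{\de}\phi_1/u_2=\bar{\de}\phi_2/u_1$, i.e.\ with a sign, but this does not affect the argument. Your closing observation---that for the scope of this paper the right move is simply to cite the self-contained treatment in Garnett---is precisely what the paper does.
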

For instance, see \cite[Ch. VIII, Th. 2.1]{gar}.\\
In \cite[Lec. IX]{nik2}, condition \eqref{eqn:corona} is related to the sine between the model spaces of $k_1$ and $k_2$. Theorem \ref{theo:dist:model} below gives another quantitative version of such a relation.\\
Let $\Theta_1$ and $\Theta_2$ be two inner functions in $\h^2$, and let 
\[
\begin{split}
&H_1:=\h^2\ominus\Theta_1\h^2\\
&H_2:= \h^2\ominus\Theta_2\h^2
\end{split}
\]
be their model spaces. Suppose that $\Theta_1$ and $\Theta_2$ have a common zero $z_0$ whose kernel function is $k_0$. Then clearly $k_0$ belongs to both $H_1$ and $H_2$, and therefore $\sin(H_1, H_2)=0$. If $\Theta_1$ and $\Theta_2$ are Blaschke products, then we can quantify more precisely the relation between the angle between their model spaces and the separation of their zeros:
\begin{theo}
\label{theo:dist:model}
Let $B_1$ and $B_2$ be two Blaschke products, $H_1$ and $H_2$ their model spaces, respectively.
\begin{description}
\item[(i)] If 
\[
\inf_{z\in\D}\max\{|B_1(z)|, |B_2(z)|\}\leq\varepsilon<1,
\]
then for $i=1, 2$ there exists $x_i$ in $H_i$ so that $||x_i||\geq\sqrt{1-\epsilon^2}$ and $||x_1-x_2||\leq2\varepsilon$;
\item[(ii)] If 
\[
\inf_{z\in\D}\max\{|B_1(z)|, |B_2(z)|\}\geq\delta>0,
\]
then there exists a constant $c_\delta>0$ (depending only on $\delta$) such that \,$\sin(H_1, H_2)>c_\delta$.
\end{description}
\end{theo}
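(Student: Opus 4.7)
The two parts of Theorem \ref{theo:dist:model} call for quite different techniques: part (i) is essentially a one-line consequence of the kernel--distance identity in Theorem \ref{theo:dkernels}, while part (ii) rests on Carleson's corona Theorem \ref{theo:corona}. For (i), the hypothesis provides, for each $\eta > 0$, a point $w \in \D$ with $\max\{|B_1(w)|, |B_2(w)|\} \leq \varepsilon + \eta$; let $\hat{k}_w := k_w / ||k_w||$ denote the normalized reproducing kernel at $w$, and set $x_i := P_{H_i}\hat{k}_w$. Theorem \ref{theo:dkernels} gives $||\hat{k}_w - x_i|| = |B_i(w)| \leq \varepsilon + \eta$, whence the Pythagorean identity yields $||x_i||^2 = 1 - |B_i(w)|^2 \geq 1 - (\varepsilon + \eta)^2$, and the triangle inequality yields $||x_1 - x_2|| \leq 2(\varepsilon + \eta)$. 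Letting $\eta \to 0$ completes (i).

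For part (ii), Carleson's corona Theorem produces $g_1, g_2 \in \h^\infty$ with $||g_i||_\infty \leq C_\delta$ and $B_1 g_1 + B_2 g_2 = 1$. Fix $h \in H_2$ with $||h|| = 1$ and write
\[
h = B_1 g_1 h + B_2 g_2 h.
\]
Since $B_2 g_2 h \in B_2 \h^2 = H_2^\perp$, it is orthogonal to $h$, so pairing the identity above with $h$ leaves $\braket{h, B_1 g_1 h} = 1$. Now $B_1 g_1 h$ already lies in $B_1 \h^2$; therefore, by self-adjointness of the orthogonal projection onto $B_1 \h^2$,
\[
\braket{P_{B_1 \h^2} h, B_1 g_1 h} = \braket{h, B_1 g_1 h} = 1.
\]
Cauchy--Schwarz, together with $B_1$ being inner (so that $||B_1 g_1 h|| = ||g_1 h|| \leq C_\delta$), then gives $1 \leq ||P_{B_1 \h^2} h|| \cdot C_\delta$. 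Since $H_1^\perp = B_1 \h^2$ inside $\h^2$, this is exactly $\dist(h, H_1) = ||P_{B_1 \h^2} h|| \geq 1/C_\delta$; taking the infimum over unit vectors $h \in H_2$ delivers $\sin(H_1, H_2) \geq c_\delta$ with $c_\delta := 1/C_\delta$.

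The main obstacle I expect is the step in (ii) where one chooses which inner product to probe: a naive Cauchy--Schwarz on $|\braket{h, B_1 g_1 h}| \leq ||h|| \cdot ||B_1 g_1 h|| \leq C_\delta$ is vacuous, since it only reproduces $1 \leq C_\delta$ (which we already know from the corona hypothesis). The decisive trick is to insert the projection $P_{B_1 \h^2}$---which is invisible to $B_1 g_1 h$ since that element already lies in $B_1 \h^2$---thereby upgrading the trivial identity $\braket{h, B_1 g_1 h} = 1$ into a nontrivial lower bound on the distance from $h$ to $H_1$. Part (i), by contrast, is an immediate corollary of Theorem \ref{theo:dkernels}, modulo a harmless $\eta$-approximation when the infimum is not attained.
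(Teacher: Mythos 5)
Your proof is correct and runs along essentially the same lines as the paper's, so I will be brief, while flagging two small points of comparison.

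For part (i), the paper introduces an intermediate vector $y := P_H(k_z/||k_z||)$, where $H = \overline{\spa}(H_1 \cup H_2) = \h^2 \ominus B_1B_2\h^2$, computes $||x_i-y||^2 = |B_i(z)|^2(1-|B_j(z)|^2) \leq \varepsilon^2$ by Pythagoras, and then applies the triangle inequality to $x_1, y, x_2$. You instead use the normalized kernel $\hat{k}_w$ itself as the pivot, getting $||\hat{k}_w - x_i|| = |B_i(w)| \leq \varepsilon$ directly from Theorem~\ref{theo:dkernels} and concluding by the same triangle inequality. This is a genuine (if modest) simplification: it avoids the auxiliary projection onto $H$ and the accompanying computation entirely. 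The only caveat is the ``$\eta\to 0$'' remark: if the infimum equals $\varepsilon$ and is not attained, passing to a limit of the $x_i^\eta$ is not automatic (a weak limit only gives an upper bound on the norm). The paper's proof tacitly assumes a point $z$ with $|B_i(z)|\leq\varepsilon$ exists, so it shares this cosmetic gap; for $\varepsilon$ above the actual infimum it causes no problem.

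For part (ii), your argument and the paper's are the same trick in two guises. You fix $h\in H_2$, expand $h = B_1g_1h + B_2g_2h$, kill the $B_2$-term by orthogonality against $h$, and then insert $P_{B_1\h^2}$ (invisible to $B_1g_1h$, which already lies in $B_1\h^2$) before applying Cauchy--Schwarz, obtaining $\dist(h,H_1)=||P_{B_1\h^2}h|| \geq 1/C_\delta$. The paper fixes $f_1\in H_1$, writes $f_1 = f_2 + g$ with $g = P_{B_2\h^2}f_1$, and pairs $\braket{h_2B_2f_1, f_1} = \braket{h_2B_2f_1, g}$; this is precisely the dual move (replacing $f_1$ by its $B_2\h^2$-component on the right slot of the pairing rather than inserting a projection on the left). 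Your choice of starting in $H_2$ has the small advantage of matching the definition $\sin(H_1,H_2) = \inf\{\dist(k,H_1) : k\in H_2,\ ||k||=1\}$ literally, whereas the paper bounds $\dist(f_1,H_2)$ for $f_1\in H_1$ and silently invokes the symmetry of the hypothesis in $B_1, B_2$. Both are fine.
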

\begin{proof}
Let $P_i$ be the orthogonal projection onto $H_i$, $i=1, 2$, and $P$ be the orthogonal projection onto $H$, the subspace spanned by $H_1$ and $H_2$. Observe that $H=\h^2\ominus B_1B_2\h^2$, since $B_1$ and $B_2$ are Blaschke products.
\begin{description}
\item[(i)] Let $z$ be a point of $\D$ such that $|B_i(z)|\leq\varepsilon$, $i=1,2$, and let 
\[
\begin{split}
&x_i:=P_i\left(\frac{k_z}{||k_z||} \right)\qquad i=1, 2\\
&y=P\left(\frac{k_z}{||k_z||} \right).
\end{split}
\]
By Theorem \ref{theo:dkernels}, $||x_i||\geq\sqrt{1-\varepsilon^2}$ and 
\[
||x_i-y||^2=|B_i(z)|^2(1-|B_j(z)|)^2\leq|B_i(z)|^2\leq\varepsilon^2, \qquad \{i, j\}=\{1, 2 \}
\]
since $x_i=P_i(y)$, $i=1, 2$. The triangle inequality concludes the proof.
\item[(ii)] Let $f_1$ in $H_1$ have $||f_1||=1$, and $f_2=P_2(f_1)$. Therefore $f_1=f_2+g$, where $g$ belongs to $B_2\h^2$ and $\dist(f_1, H_2)=||g||$. Thanks to Theorem \ref{theo:corona} there exist $h_1$ and $h_2$, $||h_i||_\infty\leq C_\delta$, so that $h_1B_1+h_2B_2=1$. Therefore 
\[
f_1=h_1B_1f_1+h_2B_2f_1,
\]
and by orthogonality and the Cauchy-Schwartz inequality
\[
1=\braket{f_1, f_1}=\braket{h_2B_2 f_1, f_1}=\braket{h_2B_2f_1, g}\leq ||h_2||\cdot||g||\leq C_\delta\cdot\dist(f_1, H_2).
\]
To conclude the proof, it suffices to choose $c_\delta=1/C_\delta$ and observe that $f_1$ was chosen arbitrarily among all the unit vectors in $H_1$.
\end{description}
\end{proof}
As a consequence, for a sequence of matrices the notions of strong separation in Definition \ref{defi:sep} and Definition \ref{defi:sep:model} coincide:
\begin{coro}
Let $(A_n)_{n\in\N}$ be a sequence of matrices with spectra in $\D$ and let $(H_n)_{n\in\N}$ be the associated sequence of  model spaces. Then $(A_n)_{n\in\N}$ is strongly separated if and only if $(H_n)_{n\in\N}$ is strongly separated. 
\end{coro}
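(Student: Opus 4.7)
The plan is to apply Theorem \ref{theo:dist:model} pairwise, with $B_1 := B_{A_n}$ and $B_2 := \prod_{k \ne n} B_{A_k}$, whose model spaces are $H_n$ and $K_n := \underset{j \ne n}{\overline{\spa}}\{H_j\}$ respectively. Theorem \ref{theo:dist:model} links the pairwise max bound $\inf_{z \in \D} \max\{|B_1(z)|, |B_2(z)|\}$ with $\sin(H_n, K_n)$ up to universal constants, so the task reduces to showing that the uniform--in--$n$ version of this pairwise max bound is equivalent to the matrix strong separation of Definition \ref{defi:sep}.

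Assume first that $(A_n)_{n \in \N}$ is strongly separated with constant $\delta > 0$. Fix $n$ and any $z \in \D$, and let $m$ satisfy $\prod_{k \ne m}|B_{A_k}(z)| \geq \delta$. If $m = n$ then $|B_2(z)| \geq \delta$ directly; if $m \ne n$, the factor $|B_{A_n}(z)|$ appears in $\prod_{k \ne m}|B_{A_k}(z)|$ while the remaining factors are bounded by $1$, forcing $|B_1(z)| \geq \delta$. Either way $\max\{|B_1(z)|, |B_2(z)|\} \geq \delta$ uniformly in $z$ and $n$, so Theorem \ref{theo:dist:model}(ii) yields $\sin(H_n, K_n) \geq c_\delta > 0$ uniformly in $n$, proving Definition \ref{defi:sep:model}.

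For the converse, assume $\sin(H_n, K_n) \geq c > 0$ for every $n$. The contrapositive of Theorem \ref{theo:dist:model}(i) forces $\inf_{z \in \D} \max\{|B_{A_n}(z)|, |\prod_{k \ne n} B_{A_k}(z)|\} \geq c' := c/\sqrt{4 + c^2}$ uniformly in $n$: otherwise the theorem would produce $x_i \in H_i$ with $\|x_i\| \geq \sqrt{1 - \varepsilon^2}$ and $\|x_1 - x_2\| \leq 2\varepsilon$, and normalizing $x_2$ yields a unit vector in $K_n$ within $O(\varepsilon)$ of $H_n$, contradicting $\sin(H_n, K_n) \geq c$ once $\varepsilon < c'$. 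Given $z \in \D$, let $n_z$ realize $\inf_n |B_{A_n}(z)|$; if $|B_{A_{n_z}}(z)| < c'$, the pairwise bound at $n_z$ forces $\prod_{k \ne n_z}|B_{A_k}(z)| \geq c'$, so $\sup_n \prod_{k \ne n}|B_{A_k}(z)| \geq c'$ as required. The main obstacle will be the remaining case, where $|B_{A_n}(z)| \geq c'$ for \emph{every} $n$ --- geometrically, $z$ is pseudo-hyperbolically far from every eigenvalue, and the pairwise bound then gives no information on $\prod_{k \ne n}|B_{A_k}(z)|$. My plan to close this case is a compactness argument: if Definition \ref{defi:sep} failed, a sequence $z_m \in \D$ with $\sup_n \prod_{k \ne n}|B_{A_k}(z_m)| \to 0$ would admit a subsequential limit $z_\infty \in \overline{\D}$, and either $z_\infty \in \D$ is an eigenvalue of infinitely many $A_n$ --- in which case Theorem \ref{theo:dist:model}(i) applied at $z_\infty$ with $\varepsilon = 0$ produces a non-zero common vector of $H_n$ and $K_n$ for some $n$, contradicting $\sin(H_n, K_n) > 0$ --- or $z_\infty \in \partial\D$, where Carleson's corona Theorem \ref{theo:corona} applied to the uniform pairwise bound, together with the non-tangential boundary behaviour of Blaschke products, leads to a contradiction with $\sup_n \prod_{k \ne n}|B_{A_k}(z_m)| \to 0$.
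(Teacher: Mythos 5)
Your Direction 1 (strong separation of $(A_n)$ implies strong separation of $(H_n)$) is correct and is exactly the natural application of Theorem \ref{theo:dist:model}\textbf{(ii)}: for each $z$, the supremizing index $m$ in the matrix separation condition either equals $n$ (giving $|B_2(z)|\geq\delta$) or differs from $n$ (in which case $|B_{A_n}(z)|$ is a factor of the bounded-below product, giving $|B_1(z)|\geq\delta$), and the corona-based estimate of \textbf{(ii)} takes care of the rest uniformly in $n$.

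Direction 2 has a genuine gap, which you correctly locate but do not close. The extraction of the uniform pairwise bound $\inf_z\max\{|B_{A_n}(z)|,|\Phi_n(z)|\}\geq c'$ from the sine hypothesis via the contrapositive of Theorem \ref{theo:dist:model}\textbf{(i)} is fine (the normalization argument you sketch works). Your interior subcase is also fine, even more simply than you phrase it: if $z_m\to z_\infty\in\D$ and $\sup_n|\Phi_n(z_m)|\to 0$, then by continuity $\Phi_1(z_\infty)=0$, so some $B_{A_k}(z_\infty)=0$ with $k\neq 1$, which already contradicts the fact that $|B_{A_k}(z_m)|\geq c'$ for all large $m$ --- no discussion of common eigenvalues is needed. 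The real obstruction is the boundary subcase $z_\infty\in\partial\D$, and there your sketch does not constitute an argument. Carleson's corona Theorem \ref{theo:corona} applied to the pair $(B_{A_n},\Phi_n)$ only re-encodes the pairwise bound you already have, and ``non-tangential boundary behaviour of Blaschke products'' cannot be invoked: you have no control over the path along which $z_m$ approaches $\partial\D$, and the individual factors $B_{A_n}$ are bounded below along $(z_m)$ anyway, so no factor misbehaves --- only the infinite product does. There is in fact no elementary compactness argument here: the implication ``strongly separated model spaces $\Rightarrow$ matrix strong separation'' is, for scalar matrices of multiplicity one, exactly the statement ``$\inf_n\prod_{k\neq n}\rho(\lambda_n,\lambda_k)>0\,\Rightarrow\,$ uniformly strongly separated,'' which is the hard direction of Carleson's interpolation theorem; Theorem \ref{theo:dist:model} alone cannot deliver it.

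The missing ingredient is the Riesz-system characterization already recorded in the paper as $(\star)$, namely that conditions (i) and (ii) of Theorem \ref{theo:nik} are equivalent to $(\h^2\ominus B_{A_n}\h^2)_{n\in\N}$ being a Riesz system, combined with the (nontrivial, Vasyunin-type) fact that for families of model spaces with convergent Blaschke product, being uniformly minimal --- which is precisely Definition \ref{defi:sep:model} --- is equivalent to being a Riesz system. Your proposal uses only the pairwise quantitative lemma Theorem \ref{theo:dist:model} and never touches this machinery, so the case $|B_{A_n}(z)|\geq c'$ for every $n$ remains open.
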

\section{Carleson Measures and Bessel Systems }
\label{sec:bessel}
Let $\Lambda=(\lambda_n)_{n\in\N}$ be a sequence in $\D$. One can state a separation condition for the points in $\Lambda$ by controlling the norm of the embedding of $\h^2$ into $\mathrm{L}^2(\D, \mu)$, where $\mu$ is a measure that depends on the points of $\Lambda$:
\begin{defi}
Let $\mathcal{H}$ be a reproducing kernel Hilbert space on a measure set $(X, \mathcal{A})$. A positive measure $\mu$ on $X$ is a Carleson measure for $\mathcal{H}$ if 
\[
||f||_{\mathrm{L^2}(X, \mathcal{A}, \mu)}\leq C\cdot||f||_{\mathcal{H}},\qquad f\in\mathcal{H}.
\]
The least $C$ for which the above inequality holds is the Carleson constant of $\mu$.
\end{defi}
This yields another characterization for interpolating sequences \cite{carlocm}: $\Lambda$ is interpolating if and only if  
\begin{equation}
\label{eqn:ws}
\inf_{n\ne k}\rho(\lambda_n, \lambda_k)>0
\end{equation}
and 
\begin{equation}
\label{eqn:carlo}
\sum_{n\in\N}(1-|\lambda_n|^2)\delta_{\lambda_n}
\end{equation}
is a Carleson measure for $\h^2$. Condition \eqref{eqn:ws} is called \emph{weak separation} for $\Lambda$. It comes natural to ask ourselves if such a characterization extends to sequences of matrices.\\

In \cite[Prop. 9.5]{john}, one can find a characterization for a measure like \eqref{eqn:carlo} to be Carleson. Let $X\subset\C$ be a set, and $\mathcal{H}$ be a reproducing kernel Hilbert space on $X$:
\begin{prop}
\label{prop:john}
Let $(\lambda_n)_{n\in\N}$ be a sequence in $X$, and $k^\mathcal{H}_n$ be the reproducing kernel at $\lambda_n$, for any $n$. The following are equivalent:
\begin{description}
\item[(i)] The measure $\sum_{n\in\N}||k^\mathcal{H}_n||_\mathcal{H}^{-2}\delta_{\lambda_n}$ is Carleson;
\item[(ii)] The sequence $(k^\mathcal{H}_n/||k^\mathcal{H}_n||_\mathcal{H})_{n\in\N}$ of normalized kernels is a Bessel system in $\mathcal{H}$.
\end{description}
\end{prop}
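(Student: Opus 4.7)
The plan is to observe that conditions (i) and (ii) are essentially the same statement, once one unpacks the definitions using the reproducing property of $k^\mathcal{H}_n$.

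First, I would write out what it means for the measure $\mu:=\sum_{n\in\N}\|k^\mathcal{H}_n\|_\mathcal{H}^{-2}\delta_{\lambda_n}$ to be Carleson. By definition, this requires a constant $C>0$ so that for every $f\in\mathcal{H}$,
\[
\|f\|_{\mathrm{L}^2(X,\mu)}^2=\sum_{n\in\N}|f(\lambda_n)|^2\,\|k^\mathcal{H}_n\|_\mathcal{H}^{-2}\leq C\,\|f\|_\mathcal{H}^2.
\]
Next, I would invoke the reproducing property $f(\lambda_n)=\braket{f,k^\mathcal{H}_n}_\mathcal{H}$, which rewrites each summand as
\[
|f(\lambda_n)|^2\,\|k^\mathcal{H}_n\|_\mathcal{H}^{-2}=\left|\braket{f,\tfrac{k^\mathcal{H}_n}{\|k^\mathcal{H}_n\|_\mathcal{H}}}_\mathcal{H}\right|^2.
\]

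With this identity the Carleson condition becomes
\[
\sum_{n\in\N}\left|\braket{f,\tfrac{k^\mathcal{H}_n}{\|k^\mathcal{H}_n\|_\mathcal{H}}}_\mathcal{H}\right|^2\leq C\,\|f\|_\mathcal{H}^2,\qquad f\in\mathcal{H},
\]
which is exactly the statement that the normalized reproducing kernels $(k^\mathcal{H}_n/\|k^\mathcal{H}_n\|_\mathcal{H})_{n\in\N}$ form a Bessel system in $\mathcal{H}$ with Bessel bound $C$. Since the two inequalities coincide term by term, (i) and (ii) hold with the same optimal constant, and the equivalence follows.

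There is no substantive obstacle here: the only ingredient is the reproducing property, and the proof is essentially a bookkeeping identity. The content of the statement is conceptual rather than technical, namely that for point-mass measures weighted by $\|k^\mathcal{H}_n\|_\mathcal{H}^{-2}$, the Carleson embedding inequality is nothing but the Bessel inequality for the normalized kernels.
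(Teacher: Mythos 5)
Your proof is correct, and since the paper cites \cite[Prop.~9.5]{john} without reproducing the argument there is no in-paper proof to compare against; your unpacking via the reproducing property is indeed the standard way to see this equivalence, and it is essentially the argument in the cited source. One small point worth being explicit about: the inequality you arrive at,
\[
\sum_{n\in\N}\left|\braket{f,\tfrac{k^\mathcal{H}_n}{\|k^\mathcal{H}_n\|_\mathcal{H}}}_\mathcal{H}\right|^2\leq C\,\|f\|_\mathcal{H}^2,
\]
says that the \emph{analysis} operator $f\mapsto\big(\braket{f,x_n}\big)_n$ is bounded, whereas the paper's stated definition of a Bessel system is boundedness of the \emph{synthesis} operator $T_E\colon e_n\mapsto x_n$. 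These coincide because the analysis operator is the adjoint of the synthesis operator, so one is bounded (with a given norm) if and only if the other is; mentioning this dual step closes the small gap between the form you derive and the definition the paper uses.
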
 
Recall that a \emph{Bessel system} in a Hilbert space $\mathcal{H}$ is a sequence $(x_n)_{n\in\N}$ so that the linear map $T_E\colon\mathcal{H}\to\overline{\spa}\{x_n\}_{n\in\N}$ that extends
\begin{equation}
\label{eqn:syst}
T_E(e_n)=x_n
\end{equation}
is bounded, for any $E=(e_n)_{n\in\N}$ orthonormal basis of $\mathcal{H}$. If \eqref{eqn:syst} extends to a bounded and invertible linear map, we will say that $(x_n)_{n\in\N}$ is a \emph{Riesz system}.\\
These almost-orthogonality notions can be extended to sequences $(X_n)_{n\in\N}$ of closed subspaces of a given Hilbert space:
\begin{defi}
Let $X=(X_n)_{n\in\N}$ be a sequence of closed subspaces of a Hilbert space. Then
\begin{description}
\item[(a)] $X$ is a Bessel system if there exists a positive $C>0$ such that, for any $(x_n)_{n\in\N}$ in $\mathcal{H}$ so that $x_n$ belongs to $X_n$, the map $T$ in equation \eqref{eqn:syst} extends to a linear operator bounded by $C$;
\item[(b)] $X$ is a Riesz System if there exists a positive $C>0$ such that, for any $(x_n)_{n\in\N}$ in $\mathcal{H}$ so that $x_n$ belongs to $X_n$, the map $T$ in equation \eqref{eqn:syst} extends to a linear operator bounded by $C$ and bounded below by $1/C$.
\end{description}
\end{defi}
A sequence of matrices $(A_n)_{n\in\N}$ is interpolating if and only if the sequence $(H_n)_{n\in\N}$ of their model spaces is a Riesz system. Indeed, \cite[Th. 3.2.14.]{nik}, condition (i) and (ii) of Theorem \ref{theo:nik} are equivalent to 
\begin{equation}
\label{eqn:model:riesz}
(\h^2\ominus\Theta_n\h^2)_{n\in\N}\quad\text{is a Riesz system in }\,\h^2.\tag{$\star$}
\end{equation}
Observe that if our matrices are just scalars, say $(\lambda_n)_{n\in\N}$, then their model spaces are just one dimensional subspaces of $\h^2$ generated by their kernel functions. In this case, weak separation corresponds to weak separation of model spaces:
\begin{defi}
A sequence $(K_n)_{n\in\N}$ of closed subspaces of a Hilbert space is weakly separated if 
\[
\inf_{j\ne n}\sin(K_n, K_j)>0.
\]
\end{defi}
Thanks to Theorem \ref{theo:dist:model}, the lines generated by the kernel functions at the points $(\lambda_n)_{n\in\N}$ are weakly separated if and only if there exists a positive $\delta$ so that, for any $z$ in $\D$,
\[
\inf_{n\ne j}\max\{|b_{\lambda_n}(z)|, |b_{\lambda_j}(z)|\}\geq\delta.
\]
This is equivalent to saying that $(\lambda_n)_{n\in\N}$ is weakly separated: for instance, see \cite[Lemma 3.2.18]{nik}. Thus Carleson's Theorem and Proposition \ref{prop:john} give
\begin{theo}
\label{theo:syst}
Let $(k_n)_{n\in\N}$ be a sequence of normalized kernel functions in $\h^2$, and let 
\[
K_n=\spa\{k_n\}.
\]
Then $(K_n)_{n\in\N}$ is a Riesz system if and only if it is a weakly separated Bessel system.
\end{theo}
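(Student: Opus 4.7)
The plan is to derive the theorem by chaining together several equivalences already established in the paper, rather than by any new computation.

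The forward direction is essentially immediate: a Riesz system is automatically Bessel, and considering the pair $\{k_n, k_j\}$ (with zero at every other index) and phases chosen to minimize the norm of the resulting sum, the lower Riesz bound forces $|\braket{k_n, k_j}|$ to stay uniformly below $1$ for $n \ne j$. Since each $K_n$ is a one-dimensional subspace spanned by a unit vector, $\sin(K_n, K_j) = \sqrt{1 - |\braket{k_n, k_j}|^2}$, so weak separation of $(K_n)_{n\in\N}$ follows at once.

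For the reverse direction, suppose $(K_n)_{n\in\N}$ is both weakly separated and Bessel, and let $(\lambda_n)_{n\in\N}\subset\D$ be the points at which the normalized kernels $k_n$ are based. I would translate each hypothesis into a classical scalar condition on $(\lambda_n)_{n\in\N}$. By Proposition \ref{prop:john}, the Bessel hypothesis on the one-dimensional spans $(K_n)_{n\in\N}$ is equivalent to $\sum_{n\in\N}(1-|\lambda_n|^2)\delta_{\lambda_n}$ being a Carleson measure for $\h^2$. By Theorem \ref{theo:dist:model} together with the discussion preceding the statement of the theorem (see also \cite[Lemma 3.2.18]{nik}), the weak separation of the lines $(K_n)_{n\in\N}$ is equivalent to $\inf_{n \ne k}\rho(\lambda_n, \lambda_k) > 0$, i.e., to the classical condition \eqref{eqn:ws}.

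Carleson's characterization of interpolating sequences \cite{carlocm} then says that the combination of these two scalar conditions is exactly the statement that $(\lambda_n)_{n\in\N}$ is an interpolating sequence in $\D$, and finally the scalar instance of \eqref{eqn:model:riesz} — where each model space $\h^2\ominus b_{\lambda_n}\h^2$ reduces to $K_n = \spa\{k_n\}$ — identifies interpolating with $(K_n)_{n\in\N}$ being a Riesz system in $\h^2$, closing the loop. Since each link in this chain is already in place, there is no genuinely difficult step; the main point to verify — rather than an obstacle — is the bookkeeping that the Bessel/Riesz notions in the subspace sense of Definition~5.2 coincide, under one-dimensionality, with the ordinary Bessel/Riesz conditions on the unit vectors $(k_n)_{n\in\N}$ that appear in Proposition \ref{prop:john}, which is immediate.
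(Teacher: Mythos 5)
Your proposal is correct and follows essentially the same route as the paper: translate weak separation of the lines $(K_n)$ into weak separation of the underlying points (via Theorem \ref{theo:dist:model} and Lemma 3.2.18 in \cite{nik}), translate the Bessel condition into the Carleson-measure condition via Proposition \ref{prop:john}, apply Carleson's characterization of interpolating sequences, and close the loop with \eqref{eqn:model:riesz}. The only cosmetic difference is that you handle the forward implication by a short direct argument on pairs of unit vectors, while the paper runs the same chain of equivalences in both directions; this is a harmless variation.
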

It is important to observe is that Theorem \ref{theo:syst} does not extend to any sequence $(x_n)_{n\in\N}$ in a Hilbert space $\mathcal{H}$. Indeed, let $(e_n)_{n\in\N}$ be an orthonormal basis of $\mathcal{H}$, and let $\gamma_n$ be a sequence that converges to $0$. Then
\begin{equation}
\label{eqn:sys:count}
x_n:=\begin{cases}
e_{n-2}+e_{n-1}+\gamma_ne_n\quad&\text{if }\, n\,\,\text{is a multiple of }\,3\\
e_n\quad&\text{otherwise}
\end{cases}
\end{equation}
 is a weakly separated Bessel system, but it is not a Riesz system, as 
\[
\dist(e_{3n}, \spa\{e_{3n-2}, e_{3n-1}\})=\gamma_n\to0,
\]
as $n$ goes to infinity. It turns out then that Szegö kernels are somehow special: a sequence like \eqref{eqn:sys:count} can not be made by kernel functions since, thanks to Theorem \ref{theo:dkernels}, a normalized kernel function approaches the span of two others if and only if it actually approaches one of them. Indeed, for any $w_1$, $w_2$ and $w_3$ in $\D$,
\[
\begin{split}
&\dist\left(\frac{k_{w_3}}{||k_{w_3}||}\,,\,\spa\{k_{w_1}, k_{w_2}\}\right)=|b_{w_1}(w_3)b_{w_2}(w_3)|\\
=&\dist\left(\frac{k_{w_3}}{||k_{w_3}||}\,,\,\spa\{k_{w_1}\}\right)~\dist\left(\frac{k_{w_3}}{||k_{w_3}||}\,,\,\spa\{ k_{w_2}\}\right).
\end{split}
\]

We asked ourselves if Theorem \ref{theo:syst} can be generalized to a sequence of model spaces associated to a sequence of matrices $A=(A_n)_{n\in\N}$. By \eqref{eqn:model:riesz} and Theorem \ref{theo:dist:model}, a Riesz system of model spaces is strongly separated, and therefore a weakly separated Bessel system. The converse also holds, if the sizes of the matrices in $A$ are bounded:
\begin{theo}
\label{theo:matbessel}
Let $A=(A_n)_{n\in\N}$ be a sequence of matrices with spectra in the unit disc, and assume that their dimensions are uniformly bounded. Then $A$ is interpolating if and only if the sequence of its model spaces is a weakly separated Bessel system.
\end{theo}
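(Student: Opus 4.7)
I prove the two directions separately. For the forward direction, Theorem \ref{theo:intmat} together with \eqref{eqn:model:riesz} gives that if $A$ is interpolating then the model spaces $(H_n)_{n\in\N}$ form a Riesz system in $\h^2$. Every Riesz system is Bessel by definition; and the lower Riesz bound, applied to $(y_n,-P_{H_m}y_n,0,0,\dots)$ with unit $y_n\in H_n$ and $m\neq n$, gives $\dist(y_n,H_m)^2\geq 1/C$, which is a uniform positive lower bound on $\sin(H_m,H_n)$, i.e.\ weak separation. This half uses no dimension hypothesis.

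For the converse, my plan is to bypass the Riesz property of $(H_n)$ and argue directly that weak separation together with Bessel implies strong separation of $A$, after which Theorem \ref{theo:intmat} concludes the proof. The first step is to pass Bessel to its dual analysis form: via the standard duality between the synthesis map $T\colon\bigoplus_n H_n\to \h^2$, $T\bigl((y_n)_{n\in\N}\bigr)=\sum_n y_n$, and its adjoint $T^* f = (P_{H_n}f)_{n\in\N}$, Bessel is equivalent to
\[
\sum_n \|P_{H_n} f\|^2 \;\leq\; C\,\|f\|^2, \qquad f\in \h^2.
\]
Testing on the reproducing kernel $k_w$ and invoking the identity $\|P_{H_n}k_w\|^2 = (1-|B_{A_n}(w)|^2)\|k_w\|^2$ (established in the proof of Theorem \ref{theo:dkernels}) yields the Carleson-type pointwise bound $\sum_n(1-|B_{A_n}(w)|^2)\leq C$ for every $w\in\D$. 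Next, weak separation combined with Theorem \ref{theo:dist:model} furnishes $\delta>0$ such that at each $w\in\D$ at most one index $n_w$ can satisfy $|B_{A_{n_w}}(w)|<\delta$, while $|B_{A_k}(w)|\in[\delta,1]$ for all $k\neq n_w$. The elementary bound $-\log t\leq C_\delta(1-t^2)$ on $[\delta,1]$ then produces
\[
\prod_{k\neq n_w}|B_{A_k}(w)|\;\geq\;\exp\!\biggl(-C_\delta\sum_{k\neq n_w}(1-|B_{A_k}(w)|^2)\biggr)\;\geq\;e^{-C_\delta C}>0,
\]
so $\sup_n\prod_{k\neq n}|B_{A_k}(w)|$ is uniformly bounded below in $w$, which is strong separation of $A$.

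The main obstacle I anticipate is pinpointing the exact role of the bounded-dimension hypothesis, since the chain above is formally insensitive to it. This suggests the assumption is needed only to legitimize the synthesis/analysis duality in the paper's definition of a Bessel system of subspaces, where uniformity of the operator-norm bound across all choices of $y_n\in H_n$ is explicitly required. Should the direct route above prove incomplete, the fallback is a scalar reduction: decompose each $A_n$ into Jordan blocks, translate interpolation into a generalized scalar problem on the eigenvalues with their orders as multiplicities, and combine Theorem \ref{theo:nik} with Nikolski's lemma that bounded multiplicities make strong and uniform strong separation coincide. In this alternative, the bounded-dimension hypothesis enters essentially as a bound on the orders of the eigenvalues.
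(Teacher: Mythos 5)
Your argument is correct in both directions, but the converse is a genuinely different route from the paper's, and a more elementary one. The paper proves the converse by invoking the Feichtinger conjecture (i.e.\ the Marcus--Spielman--Srivastava resolution of Kadison--Singer): it uses the bounded-dimension hypothesis to pass from a Bessel \emph{system of model spaces} to a Bessel \emph{sequence of kernel vectors} (the Bessel constant degrades by the maximal dimension), decomposes that sequence into finitely many Riesz sequences, and then patches together pointwise lower bounds on the partial products. Your converse bypasses all of this: you test the Bessel bound $\sum_n\|P_{H_n}f\|^2\leq C\|f\|^2$ against the Szeg\H{o} kernel $k_w$, use the Pythagoras-type consequence of Theorem \ref{theo:dkernels} to rewrite it as $\sum_n\bigl(1-|B_{A_n}(w)|^2\bigr)\leq C$, obtain from weak separation plus Theorem \ref{theo:dist:model}(i) a $\delta>0$ such that at most one index $n_w$ has $|B_{A_{n_w}}(w)|<\delta$, and then convert the sum bound to a product bound via the elementary inequality $-\log t\leq C_\delta(1-t^2)$ valid on $[\delta,1]$. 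I checked each step (the projection identity, the extraction of $\delta$, the convexity estimate, and the resulting uniform lower bound $\prod_{k\ne n_w}|B_{A_k}(w)|\geq e^{-C_\delta C}$) and they hold. What this buys is substantial: your argument avoids Kadison--Singer entirely and, more strikingly, it never uses the bounded-dimension hypothesis. You flag this yourself but guess that the hypothesis is needed to justify the synthesis/analysis duality; that guess is off — the duality $\|T\|=\|T^*\|$ underlying $\sum_n\|P_{H_n}f\|^2\leq C\|f\|^2$ has nothing to do with dimension. The paper needs bounded dimension only to pass from subspaces to vectors so that Feichtinger can be invoked; since you never make that passage, your proof in fact establishes the stronger statement without the dimension restriction, which the paper explicitly says it was unable to do. The fallback sketch in your last paragraph (scalar reduction plus Nikolski's bounded-multiplicity lemma equating strong and uniform strong separation) is not actually germane to this theorem and can be dropped.
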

To prove Theorem \ref{theo:matbessel}, we will invoke the \emph{Feichtinger's conjecture}, which asserts that any Bessel sequence $(x_n)_{n\in\N}$ of vectors in a Hilbert space is a finite disjoint union of Riesz systems. It has been shown, \cite{casazza} \cite{weaver}, that the Feichtinger conjecture is equivalent to many other conjecture in operator theory, including the Paving conjecture, who had been proved by the celebrated work of Marcus, Spielmann and Srivastava \cite{marcus}. 
\begin{proof}[Proof of Theorem \ref{theo:matbessel}]
Let $(H_n)_{n\in\N}$ be the sequence of model spaces associated to $A$. We will show that if $(H_n)_{n\in\N}$ is a weakly separated Bessel system, then $A$ is strongly separated. Let $k$ be the maximal number of distinct eigenvalues of a matrix in $A$, and $m$ the maximal order of any eigenvalue of the matrices in $A$. Since the sizes of $A$ are uniformly bounded,  both $k$ and $m$ are finite, and then it suffices to take care of the case in which the Blaschke factors of the matrices in $A$ have the form
\[
B_{A_n}=\prod_{j=1}^kb_{\lambda_{n, j}}^m.
\]
 Since the model spaces of $A$ are finite dimensional and a Bessel system, the sequence
\[
\{k_{\lambda_{n, j}}\,\,| \,\, j=1,\dots, k,\,\, n\in\N\}
\]
is a Bessel sequence, and therefore it is the finite union $\bigcup_{l=1}^r\Lambda_l$ of Riesz sequences. Fix $z$ in $\D$ and let $n$ be the index that minimizes $|B_{A_n}(z)|$. We want to show that there exists some choice of a positive $\delta$ independent of $z$ so that
\[
\prod_{k\ne n}|B_{A_k}(z)|\geq\delta.
\] 
Let $\lambda_l$ be the point of $\Lambda_l$ so that
\[
\rho(z, \lambda_l)=\min_{\lambda\in\Lambda_l}\rho(z, \lambda).
\]
Thanks to \eqref{eqn:model:riesz}, for kernel functions strong separation is equivalent to being a Riesz system, and therefore
\[
\prod_{\lambda\in\Lambda_{l}\setminus\{\lambda_l\}}|b_{\lambda}(z)|\geq\delta',\qquad l=1,\dots, r,
\]
where $\delta'$ does not depend on $z$. If  $\Tilde{\Lambda}=\{k_{\lambda_l},\,\,|\,\,l=1, \dots, r\}$  is contained $H_n$, then we conclude by setting $\delta=\delta'^{rm}$. If $s$ elements in $\tilde{\Lambda}$ are not in $H_n$, weak separation implies that their distance from $z$ is bounded below by a positive $\delta''$, independently of $z$. Then we can conclude by setting $\delta=\delta'^{(r-s)m}\delta''^{sm}$.
\end{proof}
Unfortunately, we were not able to drop the hypothesis on the sizes of the matrices in $A$ in Theorem \ref{theo:matbessel}. To do so, we would need a more general statement of the Feichtinger conjecture to be true for a sequence of model spaces:
\begin{question}
Is any Bessel system of model spaces a finite union of Riesz systems?
\end{question}
A positive answer for model spaces associated to convergent Blaschke factors would extend Theorem \ref{theo:matbessel} to any sequence of matrices.\\
 Indeed, let $(B_n)_{n\in\N}$ be the sequence of Blaschke factors associated to a sequence of matrices with spectra in the unit disc, and assume that the associated sequence $(H_n)_{n\in\N}$ of model spaces  is weakly separated and Bessel. Suppose that $\N$ can be partitioned by $\bigcup_{i=1}^M\sigma_i$ and any $\mathcal{X}_i=(H_n)_{n\in\sigma_i}$ is a Riesz system. Let
\[
\delta=\inf_{z\in\D}\inf_{k\ne n\in\N}\max\{|B_k(z)|, |B_n(z)|\}.
\]
Then $\delta>0$, thanks to weak separation and Theorem \ref{theo:dist:model}.\\
 Fix $z$ in $\D$, and let $j_i$ be, for any $i=1,\dots,M$, so that
\[
|B_{j_i}(z)|=\min_{n\in\sigma_i}|B_n(z)|.
\]
By Theorem \ref{theo:nik} and \eqref{eqn:model:riesz}
\[
\prod_{n\in\sigma_i\setminus\{j_i\}}|B_n(z)|\geq\delta_i\qquad i=1,\dots,M,
\]
for some positive choice of $\delta_1,\dots,\delta_M$ independent on $z$. Observe that the index $j$ in $\N$ that minimizes $|B_n(z)|$ is in $(j_i)_{i=1}^M$. Without loss of generality let $j=j_1$: therefore
\[
\max_{n\in\N}\prod_{k\ne n}|B_k(z)|=\left(\prod_{k\ne j_1,\dots,j_M}|B_k(z)|\right)\left(\prod_{i=2}^M |B_{j_i}(z)|\right)\geq\delta^{M-1}\prod_{i=1}^M\delta_i>0,
\]
independently of $z$. By Theorem \ref{theo:nik} and \eqref{eqn:model:riesz}, $(H_n)_{n\in\N}$ is a Riesz system and the associated sequence of matrices is interpolating.


\begin{thebibliography}{99}
\bibitem{john} \textsc{Agler, J and M$^{\text{c}}$Carthy, J. E.}: \emph{Pick Interpolation and Hilbert Function Spaces}, Graduate Studies in Mathematics, vol 44, American Mathematical Society, 2002.
\bibitem{carloss} \textsc{Carleson, L.:} An Interpolation Problem for Bounded Analytic Functions, \emph{American Journal of Mathematics}, Vol. 80, No. 4 (Oct. 1958), pp. 921-930.
\bibitem{carlocm} \textsc{Carleson, L.:} Interpolation by Bounded Analytic Functions and the Corona Problem, \emph{Annals if Mathematics}, Second Series, Vol. 76, No. 3 (Nov. 1962), pp. 547-559.
\bibitem{casazza} \textsc{Casazza, P. G., Christensen, O., Lindner, A. M. and Verhsynin, R.:} Frames and the Feichtinger Conjecture, \emph{Proceedings of the American Mathematical Society}, Vol. 133, No. 4, pp. 1025-1033.
\bibitem{ds} \textsc{Dunford, N. and Schwartz, J. T., with the assistance of Bade, W. G. and Bartle, R. G.}: \emph{Linear Operators}, Part I: General Theory, Wiley Classic Library Edition, 1988.
\bibitem{ross} \textsc{Garcia, R. S., Mashreghi, S. and Ross, W. T.}: \emph{Introduction to Model Spaces and their Operators}, Cambridge Studies in Advanced Mathematics 148, 2016.
\bibitem{gar} \textsc{Garnett, J., B.}: \emph{Bounded Analytic Functions}, Revised First Edition, Graduate Texts in Mathematics 236, Springer.
\bibitem{marcus} \textsc{Marcus, A. W., Spielman, D. A. and Srivastava, N.:} Interlacing Families II: Mixed Characteristic Polynomials and the Kadison-Singer Problem, \emph{Annals of Mathematics}, Second Series, Vol. 182, No. 1 (July 2015), pp. 327-350.
\bibitem{nik} \textsc{Nikol'ski\u i, N., K.}: \emph{Operators, Functions and Systems: an Easy Reading}, Volume 2: \emph{Model Operators and Systems}, Mathematical Surveys and Monographs, Volume 93, American Mathematical Society. 
\bibitem{nik2} \textsc{Nikol'ski\u i, N. K.}: \emph{Treatise on the Shift Operator: Spectral Function Theory}, Springer Series in Soviet Mathematics,  1985
\bibitem{beu} \textsc{Vinogradov, S. A., Gorin, A. and Khrushchev, S. V.:} Free Interpolation in $\h^\infty$ a la P. Jones, \emph{Journal of Soviet Mathematics}, Volume 22, Issue 6, pp. 1838-1839, 1983.
\bibitem{weaver} \textsc{Weaver, N.:} The Kadison-Singer Problem in Discrepancy Theory, \emph{Discrete Mathematics} 278 (2004), pp. 227-239. 
\end{thebibliography}
\end{document}